\DeclarePairedDelimiter{\ceil}{\lceil}{\rceil}
\DeclareMathOperator{\stab}{Stab}
\renewcommand{\hat}{\widehat}
\newtheorem{Theorem}{Theorem}
\newtheorem{Lemma}{Lemma}
\theoremstyle{definition}
\newtheorem*{Definition}{Definition}
\newtheorem*{Example}{Example}
\numberwithin{equation}{section}
\newcommand{\supp}{\operatorname{supp}}
\newcommand{\inner}[1]{\left< #1 \right>}
\renewcommand{\pmod}[1]{\,(\operatorname{mod} #1)}
\newcommand{\C}{\mathbb{C}}
\newcommand{\R}{\mathbb{R}}
\newcommand{\Z}{\mathbb{Z}}
\newcommand{\X}{\mathcal{X}}
\newcommand{\Y}{\mathcal{Y}}
\newcommand{\Irr}{\operatorname{Irr}}
\renewcommand{\vec}[1]{{\boldsymbol{\bf #1}}}
\newcommand{\norm}[1]{\|#1\|}
\newcommand{\minimatrix}[4]{\begin{bmatrix}#1&#2\\#3&#4\end{bmatrix}}
\newcommand{\megamatrix}[9]{\begin{bmatrix} #1 & #2 & #3\\ #4 & #5 & #6 \\ #7 & #8 & #9 \end{bmatrix}}
\title{Supercharacters, exponential sums, and the uncertainty principle} 
\author[\tiny Brumbaugh]{J. L. Brumbaugh} 
\author[Bulkow]{Madeleine Bulkow} 
	\address{Department of Mathematics, Scripps College, 1030 Columbia Ave., Claremont, CA 91711} 
\author[Fleming]{Patrick S. Fleming}	
	\address{Mathematics and Computer Science Department, South Dakota School of Mines and Technology,
	501 East Saint Joseph Street  Rapid City, South Dakota 57701}
	\email{Patrick.Fleming@sdsmt.edu}
\author[Garcia]{Luis Alberto Garcia German} 
	\address{Department of Mathematics, Pomona College, 610 N. College Ave., Claremont, CA 91711} 
	\curraddr{Department of Mathematics, Washington University in St.~Louis, One Brookings Drive, St.~Louis, MO 63130-4899}
	\email{garciagerman@wustl.edu}
\author[Garcia]{Stephan Ramon Garcia}
	\address{Department of Mathematics, Pomona College, 610 N. College Ave., Claremont, CA 91711} 
	\email{Stephan.Garcia@pomona.edu}
	\urladdr{\url{http://pages.pomona.edu/~sg064747}}
\author[Karaali]{Gizem Karaali}
	\address{Department of Mathematics, Pomona College, 610 N. College Ave., Claremont, CA 91711} 
	\email{Gizem.Karaali@pomona.edu}
	\urladdr{\url{http://pages.pomona.edu/~gk014747}}
\author[Michal]{Matt Michal} \address{School of Mathematical Sciences, Claremont Graduate University, 150 E. 10th St., Claremont, CA 91711} 
\author[Suh]{Hong Suh}
	\address{Department of Mathematics, Pomona College, 610 N. College Ave., Claremont, CA 91711} 
\author[Turner]{Andrew P. Turner} \address{Department of Mathematics, Harvey Mudd College, 301 Platt Blvd., Claremont, CA 91711}
\curraddr{Center for Theoretical Physics, Massachusetts Institute of Technology, 77 Massachusetts Ave, 6-304, Cambridge, MA 02139}
\email{apturner@mit.edu}
\thanks{S.R.~Garcia was partially supported by NSF Grants DMS-1001614 and DMS-1265973.  
G.~Karaali was partially supported by a NSA Young Investigator
Award (NSA Grant \#H98230-11-1-0186).
P.S.~Fleming, S.R.~Garcia, and G.~Karaali were partially supported by the American Institute of Mathematics (AIM)
and NSF Grant DMS-0901523 via the REUF Program.  We also gratefully acknowledge the support of the
Fletcher Jones Foundation and Pomona College's SURP Program.}
\begin{document}

\begin{abstract}
The theory of supercharacters, which generalizes classical character theory, was recently introduced by P.~Diaconis and I.M.~Isaacs, building upon earlier work of C.~Andr\'e.  We study supercharacter theories on $(\Z/n\Z)^d$ induced by the actions of certain matrix groups, demonstrating that a variety of exponential sums of interest in number theory (e.g., Gauss, Ramanujan, Heilbronn, and Kloosterman sums) arise in this manner.  We develop a generalization of the discrete Fourier transform, in which supercharacters play the role of the Fourier exponential basis.  We provide a corresponding uncertainty principle and compute the associated constants in several cases.  
\end{abstract}

\keywords{Supercharacter, conjugacy class, superclass, circulant matrix, discrete Fourier transform, DFT, discrete cosine transform, DCT, Fourier transform,
Gauss sum, Gaussian period, Ramanujan sum, Heilbronn sum, Kloosterman sum, symmetric group, uncertainty principle}
\maketitle


\section{Introduction}

	The theory of \emph{supercharacters}, of which classical character theory is a special case, was recently introduced 
	by P.~Diaconis and I.M.~Isaacs in 2008 \cite{DiIs08}, generalizing the \emph{basic characters} studied by
	C.~Andr\'e \cite{An95, An01, An02}.  
	We are interested here in supercharacter theories on the group $(\Z/n\Z)^d$ induced by
	the action of certain subgroups $\Gamma$ of the group $GL_d(\Z/n\Z)$ of invertible
	$d \times d$ matrices over $\Z/n\Z$.  In particular, we demonstrate that a variety of
	exponential sums which are of interest in number theory arise as
	supercharacter values.  Among the examples we discuss are Gauss, Ramanujan, Heilbronn, 
	and Kloosterman sums.  Moreover, we also introduce a class of exponential sums
	induced by the natural action of the symmetric group $S_d$ on $(\Z/n\Z)^d$ that yields some visually striking patterns.
	
	In addition to showing that the machinery of supercharacter theory can be used to
	generate identities for certain exponential sums, we also develop a generalization
	of the discrete Fourier transform in which supercharacters play the
	role of the Fourier exponential basis.  For the resulting \emph{super-Fourier transform},
	we derive a supercharacter analogue of the uncertainty principle.	
	We also describe the algebra of all operators that are diagonalized by our transform.
	Some of this is reminiscent of 
	the theory of Fourier transforms of characteristic functions of orbits in Lie algebras
	over finite fields \cite[Lem.~3.1.10]{Letellier}, \cite[Lem.~4.2]{Lehrer}, \cite{Springer}.
	
	Although it is possible to derive some of our results by considering the classical character theory of the semidirect
	product $(\Z/n\Z)^d \rtimes \Gamma$, the supercharacter approach is cleaner and more natural.  The character
	tables produced via the classical approach are typically large and unwieldy, containing many entries that are irrelevant
	to the study of the particular exponential sum being considered.
	This is a reflection of the fact that $(\Z/n\Z)^d \rtimes \Gamma$ is generally nonabelian and possesses a large
	number of conjugacy classes.
	On the other hand, our supercharacter tables
	are smaller and simpler than their classical counterparts.  
	Indeed, the supercharacter approach takes place entirely inside the original abelian group $(\Z/n\Z)^d$,
	that possesses only a few superclasses.  
	
	We cover the preliminary definitions and notation in Section \ref{SectionPreliminaries},
	before introducing the super-Fourier transform in Section \ref{SectionFourier}.  A number of examples, including
	those involving Gauss, Kloosterman, Heilbronn, and Ramanujan sums, are discussed in Section \ref{SectionExponential}.
	We conclude this note with a few words concerning an extension of our technique to more general matrix groups
	in Section \ref{SectionJ}.
	
\section{Supercharacter theories on $(\Z/n\Z)^d$}\label{SectionPreliminaries}

	To get started, we require the following important definition.

	\begin{Definition}[Diaconis-Isaacs \cite{DiIs08}]
		Let $G$ be a finite group, let $\X$ be a partition of the set $\Irr G$ of irreducible characters of $G$, 
		and let $\Y$ be a partition of $G$.  We call the ordered pair $(\X, \Y)$ a \emph{supercharacter theory} if 
		\begin{enumerate}\addtolength{\itemsep}{0.5\baselineskip}
			\item[(i)] $\Y$ contains $\{1\}$, where $1$ denotes the identity element of $G$,
			\item[(ii)] $|\X| = |\Y|$,
			\item[(iii)] For each $X$ in $\X$, the character  
				\begin{equation}\label{eq-SCD}
					\sigma_X = \sum_{\chi \in X} \chi(1)\chi
				\end{equation}
				is constant on each $Y$ in $\Y$.
		\end{enumerate}
		The characters $\sigma_X$ are called \emph{supercharacters} and the elements $Y$ of $\Y$
		 are called \emph{superclasses}.
	\end{Definition}
	
	If $(\X,\Y)$ is a supercharacter theory on $G$, then it turns out that each $Y$
	in $\Y$ must be a union of conjugacy classes.  One can also show that the partitions
	$\X$ and $\Y$ uniquely determine each other and, moreover, that the set $\{ \sigma_X : X \in \X\}$
	forms a basis for the space $\mathcal{S}$ of \emph{superclass functions} on $G$ (i.e., functions $f:G\to\C$ which are
	constant on each superclass).
	
	Let us now say a few words about our notation.
	We let $\vec{x} = (x_1,x_2,\ldots,x_d)$ and $\vec{y}=(y_1,y_2,\ldots,y_d)$ 
	denote elements of $G:=(\Z/n\Z)^d$ and we write $\vec{x} \cdot \vec{y} := \sum_{i=1}^d x_i y_i$ so that
	$A\vec{x} \cdot \vec{y} = \vec{x} \cdot A^T \vec{y}$ for all $\vec{x}, \vec{y}$ in $G$ and
	each $A$ in $GL_d(\Z/n\Z)$. 
	The symbol $\vec{\xi}$ will frequently be used to distinguish a vector 
	that is to be regarded as the argument of a function on $G$.
	Since we will ultimately deal with a variety of exponential sums, we also
	let $e(x) := \exp(2 \pi i x)$, so that the function $e(x)$ is periodic with period $1$.

	In the following, $\Gamma$ denotes a symmetric (i.e., $\Gamma^T = \Gamma$) subgroup of $GL_d(\Z/n\Z)$.
	For each such $\Gamma$, we construct a corresponding supercharacter theory on
	$G$ using the following recipe.
	The superclasses $Y$ are simply the orbits $\Gamma \vec{y}$ in $G$ under the action $\vec{y}^A := A\vec{y}$ of $\Gamma$.  
	Among other things, we note that $\{\vec{0}\}$ is an orbit of this action so that axiom (i) in the Diaconis-Isaacs definition is satisfied.
	The corresponding supercharacters require a bit more work to describe. 
	
	We first recall that $\Irr G = \{ \psi_{\vec{x}} : \vec{x} \in G \}$, in which
	\begin{equation}\label{eq-Psi}
		\psi_{\vec{x}}(\vec{\xi}) = e \left( \frac{\vec{x} \cdot \vec{\xi}}{n} \right).
	\end{equation}
	We now let $\Gamma$ act upon $\Irr G$ by setting
	\begin{equation}\label{eq-InverseTranspose}
		\psi_{\vec{x}}^A := \psi_{A^{-T}\vec{x}},
	\end{equation}
	where $A^{-T}$ denotes the inverse transpose of $A$.  In light of the fact that
	\begin{equation*}
		\psi_{\vec{x}}^{AB} = \psi_{(AB)^{-T} \vec{x}}= \psi_{A^{-T} B^{-T} \vec{x}}
		=  \psi^A_{B^{-T} \vec{x}} = (\psi^B_{\vec{x}})^A,
	\end{equation*}
	it follows that \eqref{eq-InverseTranspose} defines a group action.  Since
	\begin{equation*}
		\psi^A_{\vec{x}}( \vec{y}^A) = 
		e \left( \frac{ A^{-T}\vec{x} \cdot A\vec{y} }{n} \right) = 
		e \left( \frac{\vec{x} \cdot \vec{y}}{n} \right) =
		\psi_{\vec{x}}(\vec{y}),
	\end{equation*}
	it follows from a result of Brauer \cite[Thm.~6.32, Cor.~6.33]{Isaacs} that the
	actions of $\Gamma$ on $G$ and on $\Irr G$ yield the same number of orbits.
	Letting $\X$ denote the set of orbits in $\Irr G$ and $\Y$ denote the set of orbits
	in $G$, we set
	\begin{equation*}
		N:=|\X| = |\Y|.
	\end{equation*}
	In particular, condition (ii) holds.
		
	Although the elements of each orbit $X$ in $\X$ are certain characters $\psi_{\vec{x}}$,
	we shall agree to identify $\psi_{\vec{x}}$ with the corresponding vector $\vec{x}$ so that the set $X$
	is stable under the action $\vec{x} \mapsto A^{-T} \vec{x}$ of $\Gamma$.
	Having established this convention, 
	for each $X$ in $\X$ we follow \eqref{eq-SCD} and define the corresponding character
	\begin{equation}\label{eq-Supercharacter}
		\sigma_X(\vec{\xi}) = \sum_{\vec{x} \in X} e \left( \frac{ \vec{x} \cdot \vec{\xi} } {n} \right).
	\end{equation}
	We claim that the characters $\sigma_X$ are constant on each superclass $\Gamma \vec{y}$.
	Indeed, if $\vec{y}_1 = A\vec{y}_2$ for some $A$ in $\Gamma$, then 
	\begin{equation*}
		\sigma_X(\vec{y}_1) 
		= \sum_{\vec{x} \in X} e \left( \frac{ \vec{x} \cdot \vec{y}_1 } {n} \right) 
		= \sum_{\vec{x} \in X} e \left( \frac{ A^T\vec{x} \cdot \vec{y}_2 } {n} \right) 
		= \sum_{\vec{x}' \in X} e \left( \frac{ \vec{x}' \cdot \vec{y}_2 } {n} \right) 
		= \sigma_X(\vec{y}_2).
	\end{equation*}
	Therefore condition (iii) holds.  Putting this all together, we conclude that the pair $(\X,\Y)$ 
	constructed above is a supercharacter theory on $G$.
	
	We henceforth refer to the characters $\sigma_X$ as \emph{supercharacters} and the sets $Y$
	as \emph{superclasses}.  Expanding upon the notational conventions introduced above,
	we choose to identify the set $X$, whose elements are the irreducible characters that comprise
	$\sigma_X$, with the set of vectors $\{ \vec{x} : \psi_{\vec{x}} \in X\}$.  
	Having made this identification, we see
	that $\X = \Y$ since the condition $\Gamma = \Gamma^T$ ensures that
	the orbits in $G$ under the actions $\vec{x} \mapsto A\vec{x}$ and $\vec{x} \mapsto A^{-T}\vec{x}$ coincide.
	In light of this, we shall frequently regard the elements $X$ of $\X$ as superclasses.
	
	Since $\sigma_X$ is constant on each superclass $Y$, if $\vec{y}$ belongs to $Y$ we will often write
	$\sigma_X(Y)$ instead of $\sigma_X(\vec{y})$.
	Let us also note that the negative $-X := \{ -\vec{x} : \vec{x} \in X\}$ of a superclass $X$
	is also a superclass.   In particular,
	\begin{equation}\label{eq-ConjugatePairs}
		\sigma_{-X}(Y) = \overline{\sigma_X(Y)},
	\end{equation}	
	so that the complex conjugate of a supercharacter is itself a supercharacter.
	Another fact which we shall make use of is the obvious inequality
	\begin{equation}\label{eq-Max}
		|\sigma_X(\vec{\xi})| \leq |X|.
	\end{equation}

	In addition to \eqref{eq-Supercharacter}, there is another description of the supercharacters $\sigma_X$ that
	is more convenient in certain circumstances.  Letting
	\begin{equation*}
		\stab(\vec{x}) := \{ A \in \Gamma : A \vec{x} = \vec{x} \},
	\end{equation*}
	it follows that the orbit $X = \Gamma \vec{x}$ contains $|\stab(\vec{x})|$ copies of $\vec{x}$ so that
	\begin{equation} \label{eq-Stab}
		\sigma_X(\vec{\xi}) = \frac{1}{|\stab(\vec{x})|}\sum_{A \in \Gamma} e \left( \frac{  A  \vec{x} \cdot \vec{\xi} } {n} \right),
	\end{equation}
	since $\Gamma$ is closed under inversion.
	
	We now fix an enumeration $X_1,X_2,\ldots,X_N$ of $\X = \Y$ and label the 
	supercharacters corresponding to these sets $\sigma_1,\sigma_2,\ldots,\sigma_N$.	
	Recall that $L^2(G)$, the space of complex-valued functions on $G=(\Z/n\Z)^d$,
	is endowed with the inner product
	\begin{equation}\label{eq-StandardInnerProduct}
		\inner{f,g} =  \sum_{\vec{x} \in G} f(\vec{x})\overline{g(\vec{x})},
	\end{equation}
	with respect to which the irreducible characters \eqref{eq-Psi} form an orthogonal set.  We then have
	\begin{equation}\label{eq-ssmn}
		\inner{ \sigma_i, \sigma_j} = n^d |X_i| \delta_{i,j}.
	\end{equation}
	On the other hand, since supercharacters are constant on superclasses, we also have
	\begin{equation}\label{eq-SuperInnerProduct}
		\inner{\sigma_i,\sigma_j} 
		=  \sum_{\ell=1}^N |X_{\ell}| \sigma_i(X_{\ell})\overline{\sigma_j(X_{\ell})}.
	\end{equation}
	Comparing \eqref{eq-ssmn} and \eqref{eq-SuperInnerProduct}, we conclude that the $N \times N$ matrix	
	\begin{equation}\label{eq-U}
		U = \frac{1}{\sqrt{n^d}} \left[  \frac{   \sigma_i(X_j) \sqrt{  |X_j| }}{ \sqrt{|X_i|}} \right]_{i,j=1}^N
	\end{equation}
	is unitary.  The properties of this matrix are summarized in the following lemma.

	\begin{Lemma}\label{LemmaU}
		The unitary matrix $U$ given by \eqref{eq-U} satisfies the following.
		\begin{enumerate}\addtolength{\itemsep}{0.5\baselineskip}
			\item $U = U^T$, or equivalently
				\begin{equation}\label{eq-ReciprocityFormula}
					\frac{\sigma_i(X_j)}{|X_i|} = \frac{\sigma_j(X_i)}{|X_j|} ,
				\end{equation}
			\item $U^2 = P$, the permutation matrix that interchanges positions $i$ and $j$ whenever
				$X_i = - X_j$ and fixes position $i$ whenever $X_i = -X_i$,
			\item $U^4 = I$.
		\end{enumerate}
	\end{Lemma}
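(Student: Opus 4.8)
The plan is to prove (1) first by a symmetric double-sum identity, then deduce (2) from (1) together with the unitarity of $U$ and the conjugation symmetry \eqref{eq-ConjugatePairs}, and finally read off (3) at no extra cost.

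For part (1), I would fix representatives and consider the double sum
$S := \sum_{\vec{a} \in X_i} \sum_{\vec{b} \in X_j} e\!\left( \frac{\vec{a}\cdot\vec{b}}{n}\right)$,
evaluating it in the two possible orders. For a fixed $\vec{b} \in X_j$, the inner sum over $\vec{a} \in X_i$ is exactly $\sigma_{X_i}(\vec{b}) = \sigma_i(X_j)$ by \eqref{eq-Supercharacter} and the fact that $\sigma_i$ is constant on $X_j$; summing over the $|X_j|$ elements $\vec{b}$ then gives $S = |X_j|\,\sigma_i(X_j)$. Carrying out the summation in the opposite order yields, symmetrically, $S = |X_i|\,\sigma_j(X_i)$. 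Equating the two expressions gives $|X_j|\,\sigma_i(X_j) = |X_i|\,\sigma_j(X_i)$, which is precisely the reciprocity formula \eqref{eq-ReciprocityFormula}. To pass to the matrix statement $U = U^T$, I would note that clearing the square-root normalizations in \eqref{eq-U} shows that $U_{ij} = U_{ji}$ is literally equivalent to $\sigma_i(X_j)\,|X_j| = \sigma_j(X_i)\,|X_i|$, the identity just established.

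For part (2), let $\tau$ be the involution on $\{1,\dots,N\}$ determined by $X_{\tau(i)} = -X_i$ (it is an involution since $-(-X_i) = X_i$), and let $P$ be its permutation matrix, normalized so that $(Pv)_i = v_{\tau(i)}$. Because negation restricts to a bijection of the orbit $X_i$ onto $-X_i = X_{\tau(i)}$, we have $|X_{\tau(i)}| = |X_i|$; combining this with \eqref{eq-ConjugatePairs} in the form $\overline{\sigma_i(X_j)} = \sigma_{-X_i}(X_j) = \sigma_{\tau(i)}(X_j)$ and substituting into \eqref{eq-U} gives the entrywise identity $\overline{U_{ij}} = U_{\tau(i),j} = (PU)_{ij}$, that is, $\overline{U} = PU$. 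Now I would invoke both that $U$ is unitary and, from part (1), that it is symmetric: then $U^* = \overline{U^T} = \overline{U}$, so unitarity reads $I = U^*U = \overline{U}\,U = (PU)U = P\,U^2$. Hence $U^2 = P^{-1} = P$, the last equality because $P^2 = I$. It remains only to confirm that $P$, as the permutation matrix of $\tau$, interchanges positions $i$ and $j$ exactly when $X_i = -X_j$ and fixes position $i$ exactly when $X_i = -X_i$, matching the statement.

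Part (3) is then immediate: $U^4 = (U^2)^2 = P^2 = I$. The main obstacle is not conceptual but bookkeeping: the delicate step is correctly packaging the conjugation symmetry \eqref{eq-ConjugatePairs} into the clean matrix identity $\overline{U} = PU$ while tracking the $\sqrt{|X_j|}/\sqrt{|X_i|}$ normalizations and the orientation convention for $P$, since a sign or transpose error there would propagate through the deductions of both $U^2 = P$ and $U^4 = I$.
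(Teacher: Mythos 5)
Your proposal is correct, and parts (2) and (3) coincide with the paper's argument: both establish $\overline{U}=PU$ from \eqref{eq-ConjugatePairs}, combine it with symmetry and unitarity to get $I=U^*U=\overline{U}U=PU^2$, and conclude $U^2=P$, $U^4=I$. Where you genuinely diverge is part (1). The paper proves reciprocity by passing to the group-sum description \eqref{eq-Stab}, writing $|\stab(\vec{x}_i)|\sigma_i(X_j)=\sum_{A\in\Gamma}e(A\vec{x}_i\cdot\vec{x}_j/n)$, substituting $A\mapsto A^T$ (which is where the hypothesis $\Gamma=\Gamma^T$ enters explicitly), and then converting stabilizer orders to orbit sizes via the orbit--stabilizer theorem. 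You instead double-count the sum $\sum_{\vec{a}\in X_i}\sum_{\vec{b}\in X_j}e(\vec{a}\cdot\vec{b}/n)$ over orbit elements and apply Fubini, which yields $|X_j|\sigma_i(X_j)=|X_i|\sigma_j(X_i)$ directly with no stabilizers in sight. Your route is shorter and more elementary, relying only on \eqref{eq-Supercharacter} and the already-established fact that each $\sigma_i$ is constant on superclasses; note, though, that the symmetry hypothesis has not disappeared but is hidden in the identification $\X=\Y$ that lets you treat $X_j$ (an orbit of the dual action $\vec{x}\mapsto A^{-T}\vec{x}$) as a superclass on which $\sigma_i$ is constant. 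The paper's version has the advantage of making the role of $\Gamma=\Gamma^T$ visible and of generalizing more transparently to the $J$-symmetric setting of Section \ref{SectionJ}, where $\X\neq\Y$ and the analogous computation must again be run over group elements. Your bookkeeping for $P$ (the convention $(Pv)_i=v_{\tau(i)}$, the use of $|X_{\tau(i)}|=|X_i|$, and $P^2=I$) is handled correctly.
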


	\begin{proof}
		Letting $X_i = \Gamma \vec{x}_i$ and $X_j = \Gamma \vec{x}_j$, we use
		\eqref{eq-Stab} to find that
		\begin{align*}
			|\stab{\vec{x}}_i|\sigma_{i}(X_j)
			= \sum_{A \in \Gamma} e \left( \frac{A\vec{x}_i \cdot \vec{x}_j}{n} \right) 
			= \sum_{A^T \in \Gamma} e \left( \frac{A^T\vec{x}_j \cdot \vec{x}_i }{n} \right)  
			= |\stab{\vec{x}}_j|\sigma_{j}(X_i).
		\end{align*}
		We conclude from the orbit-stabilizer theorem that
		\begin{equation*}
			\frac{|\Gamma|}{|X_i|}\sigma_{i}(X_j) = \frac{|\Gamma|}{|X_j|}\sigma_{j}(X_i),
		\end{equation*}
		which implies that $U = U^T$.  In light of \eqref{eq-ConjugatePairs}, it follows that
		$\overline{U} = PU$.  Noting that
		$P = P^{-1}$, we find that $I = U^*U = \overline{U}U = PU^2$,
		so that $U^2 = P$ and $U^4 = I$.
	\end{proof}


\section{The super-Fourier transform}\label{SectionFourier}
	In this section we develop supercharacter generalizations of the discrete Fourier transform (DFT).
	Maintaining the notation and conventions established in the preceding section, we let $\X = \Y = \{X_1,X_2,\ldots,X_N\}$
	and let $\sigma_1,\sigma_2,\ldots,\sigma_N$ denote the corresponding supercharacters.		
	Let $\mathcal{S} \subset L^2(G)$ denote set of all superclass functions, equipped with the inherited $L^2(G)$ norm
	\begin{equation*}
		\norm{f} := \Big(\sum_{\ell=1}^N |X_{\ell}| |f(X_{\ell})|^2 \Big)^{\frac{1}{2}}.
	\end{equation*}
	We will often regard a function $f \in \mathcal{S}$ as a function $f:\X\to \C$.
	
	By analogy with the discrete Fourier transform, we would like to find a superclass function $\widehat{f}$
	that satisfies the \emph{inversion formula}
	\begin{equation}\label{eq-Inversion}
		f = \frac{1}{\sqrt{n^d}} \sum_{\ell=1}^N \widehat{f}(X_{\ell}) \sigma_{\ell},
	\end{equation}
	the normalization factor being included to ensure the unitarity of the map $f \mapsto \widehat{f}$ (see Theorem \ref{TheoremF} below).
	In light of \eqref{eq-ssmn} and the reciprocity formula \eqref{eq-ReciprocityFormula}, it follows that
	\begin{equation*}
		\widehat{f}(X_i) 
		= \sqrt{n^d}\frac{\inner{f, \sigma_i}}{\inner{\sigma_i,\sigma_i}} 
		= \sum_{\ell=1}^N   \frac{|X_{\ell}| f(X_{\ell}) \overline{\sigma_i(X_{\ell})}}{\sqrt{n^d}|X_i|} 
		= \frac{1}{\sqrt{n^d}} \sum_{\ell=1}^N  f(X_{\ell}) \overline{\sigma_{\ell}(X_i)} .
	\end{equation*}
	We therefore define the \emph{super-Fourier transform} of the superclass function $f$
	(induced by the action of $\Gamma$ on $(\Z/n\Z)^d$) by setting
	\begin{equation}\label{eq-Fourier}
		\widehat{f} := \frac{1}{\sqrt{n^d}} \sum_{\ell=1}^N  f(X_{\ell}) \overline{\sigma_{\ell}} .
	\end{equation}
	The linear operator $\mathscr{F}: \mathcal{S} \to \mathcal{S}$ defined by $\mathscr{F}f = \widehat{f}$
	will also be referred to as the \emph{super-Fourier transform}.
	
	Although the formulas \eqref{eq-Inversion} and \eqref{eq-Fourier} resemble familiar formulas
	involving the discrete Fourier transform, we have not yet justified that this resemblance is more than superficial.
	We next show that the super-Fourier transform indeed enjoys several of the standard algebraic properties
	of the DFT.
	
	Normalizing each of the supercharacters $\sigma_i$, we obtain the orthonormal basis $\{s_1,s_2,\ldots,s_N\}$
	of $\mathcal{S}$ whose elements are defined by
	\begin{equation}\label{eq-s}
		s_i = \frac{\sigma_i}{\sqrt{n^d |X_i|}}.
	\end{equation}
	With respect to this basis we have the expansions	
	\begin{equation}\label{eq-FTS}
		f = \sum_{\ell=1}^N \sqrt{|X_{\ell}|} \widehat{f}(X_{\ell}) s_{\ell},\qquad
		\widehat{f} = \sum_{\ell=1}^N \sqrt{|X_{\ell}|} f(X_{\ell}) \overline{s_{\ell}}.
	\end{equation}
	Computing the $(i,j)$ entry in the matrix representation of $\mathscr{F}$ with respect to the basis $\{s_1,s_2,\ldots,s_N\}$ 
	shows that	
	\begin{align*}
		\inner{ \mathscr{F}s_j,s_{i}}
		&= \left< \sum_{\ell=1}^N \sqrt{|X_{\ell}|} s_j(X_{\ell}) \overline{s_{\ell}}, s_i\right> && \text{by \eqref{eq-FTS}}\\
		&=  \sum_{\ell=1}^N \sqrt{|X_{\ell}|} s_j(X_{\ell}) \left<  \overline{s_{\ell}}, s_i\right> \\
		&= \sqrt{|-X_i|} s_j(-X_i) && \text{by \eqref{eq-ConjugatePairs}}\\
		&= \frac{ \overline{\sigma_j(X_i)} \sqrt{ |X_i|} }{\sqrt{n^d |X_j|} }. && \text{by \eqref{eq-s}}
	\end{align*}
	In other words, the matrix representation for $\mathscr{F}$
	with respect to the orthonormal basis \eqref{eq-s}
	is precisely the unitary matrix $U^*$.  At this point, most of the following theorem is a direct
	consequence of Lemma \ref{LemmaU}.

	\begin{Theorem}\label{TheoremF}
		Let $\Gamma = \Gamma^T$ be a subgroup of $GL_d(\Z/n\Z)$ and let $\X = \{X_1,X_2,\ldots,X_N\}$
		denote the set of superclasses induced by the action of $\Gamma$ on $(\Z/n\Z)^d$.  The
		super-Fourier transform satisfies the following:
		\begin{enumerate}\addtolength{\itemsep}{0.5\baselineskip}
			\item $\norm{\widehat{f}} = \norm{f}$,
			\item $[\mathscr{F}^2f](X) = f(-X)$ for every $X$ in $\X$,
			\item $\mathscr{F}^4f = f$.
		\end{enumerate}
		Moreover, if $f \in \mathcal{S}$ is not identically zero, then
		\begin{equation}\label{eq-Uncertainty}
			\ceil[\Big]{\frac{n^d}{M} } \leq |\supp f| |\supp \widehat{f} | ,
		\end{equation}
		where $\ceil{\,\cdot\,}$ denotes the ceiling function,		
		$M = \displaystyle \max_{1\leq i\leq N} | X_i|$, and
		\begin{equation*}
			\supp f = \{ X \in \X : f(X) \neq 0\}.
		\end{equation*}
	\end{Theorem}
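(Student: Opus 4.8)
\emph{Parts (1)--(3).} These fall out of the identification, obtained just above the statement, of $\mathscr{F}$ with the matrix $U^*$ in the orthonormal basis $s_1,\dots,s_N$ of \eqref{eq-s}. Since $U^*$ is unitary, $\mathscr{F}$ preserves $\norm{\cdot}$, giving (1). By Lemma \ref{LemmaU} the matrix of $\mathscr{F}^2$ is $(U^*)^2=(U^2)^*=P^*=P$ (the permutation matrix $P$ is real and symmetric, so $P^*=P$), while the matrix of $\mathscr{F}^4$ is $(U^4)^*=I$, which is (3). For (2) I would use that $P$ sends $s_j$ to $s_{j'}$, where $X_{j'}=-X_j$. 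Since $\sigma_{j'}=\sigma_{-X_j}=\overline{\sigma_j}$ by \eqref{eq-ConjugatePairs}, while $\sigma_j(-X)=\overline{\sigma_j(X)}$ directly from \eqref{eq-Supercharacter}, and $\abs{X_{j'}}=\abs{X_j}$, a one-line computation with \eqref{eq-s} gives $[\mathscr{F}^2 s_j](X)=s_{j'}(X)=s_j(-X)$; assertion (2) then follows by linearity.

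\emph{The uncertainty inequality.} I would prove this by the Donoho--Stark method, transported to the weighted space $\mathcal{S}$. The key pointwise estimate is that, for every $X_i$,
\[
 \abs{\widehat f(X_i)} = \frac{1}{\sqrt{n^d}}\Big\lvert \sum_{X_\ell\in\supp f} f(X_\ell)\,\overline{\sigma_\ell(X_i)} \Big\rvert \le \frac{1}{\sqrt{n^d}} \sum_{X_\ell\in\supp f} \abs{f(X_\ell)}\,\abs{X_\ell},
\]
where \eqref{eq-Max} supplies $\abs{\sigma_\ell(X_i)}\le\abs{X_\ell}$. Writing $\abs{f(X_\ell)}\abs{X_\ell}=\big(\abs{X_\ell}^{1/2}\abs{f(X_\ell)}\big)\abs{X_\ell}^{1/2}$ and applying Cauchy--Schwarz bounds the right-hand side by $\tfrac{1}{\sqrt{n^d}}\big(\sum_{X_\ell\in\supp f}\abs{X_\ell}\big)^{1/2}\norm f$, uniformly in $i$. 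Feeding this into Parseval, that is, part (1) written as $\norm f^2=\sum_{X_i\in\supp\widehat f}\abs{X_i}\,\abs{\widehat f(X_i)}^2$, and cancelling the common factor $\norm f^2\neq 0$, I obtain
\[
 n^d \le \Big(\sum_{X_\ell\in\supp f}\abs{X_\ell}\Big)\Big(\sum_{X_i\in\supp\widehat f}\abs{X_i}\Big).
\]
It then remains to pass from these two weighted sums to the cardinalities $\abs{\supp f}$ and $\abs{\supp\widehat f}$ by inserting the orbit-size bound $\abs{X}\le M$, after which the integrality of $n^d$ and of the positive integer $\abs{\supp f}\,\abs{\supp\widehat f}$ permits replacement of the resulting fraction by its ceiling.

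The chain of estimates above is routine; the step I expect to be the main obstacle is extracting the \emph{sharp} constant, i.e.\ arriving at $M$ rather than a larger power. Bounding both weighted sums crudely by $M\abs{\supp f}$ and $M\abs{\supp\widehat f}$ is wasteful, since the orbits occurring in a support typically have widely varying sizes and only the largest one should matter. The delicate point is therefore to control the interaction of $\sum_{X\in\supp f}\abs{X}$ with $\sum_{X\in\supp\widehat f}\abs{X}$ more efficiently---exploiting the reciprocity relation \eqref{eq-ReciprocityFormula} and the exact normalization $\sum_{\ell}\abs{X_\ell}\,\abs{\sigma_i(X_\ell)}^2=n^d\abs{X_i}$ implicit in \eqref{eq-ssmn}--\eqref{eq-SuperInnerProduct}, rather than estimating each orbit size separately---so as to lose only a single factor of $M$. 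It is precisely this bookkeeping with the weights $\abs{X_i}$ that fixes the constant in \eqref{eq-Uncertainty}, and it is where the argument must be carried out with the greatest care.
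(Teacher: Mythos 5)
Your treatment of parts (1)--(3) is correct and coincides with the paper's: the paper likewise derives them from Lemma \ref{LemmaU} together with the identification of $\mathscr{F}$ with $U^*$ in the orthonormal basis $\{s_i\}$, and its proof simply declares these parts immediate before turning to \eqref{eq-Uncertainty}.

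For the uncertainty inequality your chain is valid as far as it goes: the estimate $n^d \le \big(\sum_{X\in\supp f}\abs{X}\big)\big(\sum_{X\in\supp\widehat f}\abs{X}\big)$ is correct (it is the classical uncertainty principle on $(\Z/n\Z)^d$ applied to $f$ regarded as a function on the group, since the ordinary Fourier transform of a superclass function takes the constant value $\widehat f(X_i)$ on each $X_i$). But you then stop short of the stated bound, conceding that you only see how to lose a factor of $M$ from each weighted sum; that is, what you actually prove is $\ceil{n^d/M^2}\le\abs{\supp f}\,\abs{\supp\widehat f}$ rather than $\ceil{n^d/M}\le\abs{\supp f}\,\abs{\supp\widehat f}$. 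This is a genuine gap relative to the statement --- and your instinct that this is ``where the argument must be carried out with the greatest care'' is more right than you suspect. The paper reaches the constant $M$ only through the step $\norm{\widehat f}\le\abs{\supp\widehat f}^{1/2}\norm{\widehat f}_\infty$, which is false for the weighted norm $\norm{g}^2=\sum_\ell\abs{X_\ell}\,\abs{g(X_\ell)}^2$ unless every superclass in $\supp\widehat f$ is a singleton; the correct bound carries the factor $\big(\sum_{X\in\supp\widehat f}\abs{X}\big)^{1/2}\le\sqrt{M}\,\abs{\supp\widehat f}^{1/2}$, which reinstates the second factor of $M$. Moreover, no bookkeeping with the weights can rescue the single factor of $M$: in the Kloosterman theory of Subsection \ref{SubsectionKloosterman} with $p=3$ one has $n^d=9$, $M=2$, $N=5$, and $K_1=-1$, $K_2=2$, so $f=\sigma_1-\sigma_2$ has value vector $(-3,3,0,0,0)$ on $(X_1,\dots,X_5)$; hence $\abs{\supp f}=2$, while $\abs{\supp\widehat f}=2$ because $f$ is a combination of exactly two supercharacters, giving $\abs{\supp f}\,\abs{\supp\widehat f}=4<5=\ceil{9/2}$. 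So the inequality you actually establish --- with $M^2$, or better in the weighted form displayed above --- is the one that is true; the defect lies in the statement and in the paper's own proof at precisely the step you flagged, not in your argument.
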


	\begin{proof}
		It suffices to prove \eqref{eq-Uncertainty} (note that $|\supp f|$ denotes the number of superclasses $X \in \X$
		for which $f(X) \neq 0$).
		For $f \in \mathcal{S}$, let $\norm{f}_{\infty} = \max_{1\leq i \leq N} |f(X_i)|$.
		Using \eqref{eq-Max}, we find that
		\begin{align*}
			\norm{ \widehat{f} }_{\infty}
			&= \max_{1\leq i \leq N} | \widehat{f}(X_i) | \\
			&= \max_{1\leq i \leq N} \left| \frac{1}{\sqrt{n^d}} \sum_{\ell=1}^N f(X_{\ell}) \overline{\sigma_{\ell}(X_i)} \right|  \\
			&\leq \frac{1}{\sqrt{n^d}} \max_{1\leq i \leq N}  \sum_{\ell=1}^N |f(X_{\ell})|  |\sigma_{\ell}(X_i) |  \\
			&\leq \sqrt{\frac{M}{n^d}} \sum_{\ell=1}^N |X_{\ell} |^{\frac{1}{2}} |f(X_{\ell})|    \\
			&\leq \sqrt{\frac{M}{n^d}} |\supp f|^{\frac{1}{2}}
				\left(\sum_{\ell=1}^N |X_{\ell}| |f(X_{\ell})|^2 \right)^{\frac{1}{2}}\\
			&= \sqrt{\frac{M}{n^d}} |\supp f|^{\frac{1}{2}} \norm{f} \\
			&= \sqrt{\frac{M}{n^d}} |\supp f|^{\frac{1}{2}} \norm{\widehat{f}} \\
			&\leq \sqrt{\frac{M}{n^d}} |\supp f|^{\frac{1}{2}} |\supp \widehat{f}|^{\frac{1}{2}}\norm{\widehat{f}}_{\infty} ,
		\end{align*}
		which implies the desired result since $|\supp f|$ and $|\supp \widehat{f}|$ are positive integers.
	\end{proof}
	
	Recall that the classical Fourier-Plancherel transform $f\mapsto \widehat{f}$ on $L^2(\R)$ satisfies 
	the important identity
	\begin{equation}\label{eq-FPT}
		\hat{f'\,}\!(\xi) = 2 \pi i \xi \hat{f}(\xi)
	\end{equation}
	on a dense subset of $L^2(\R)$.
	To be more specific, the Fourier-Plancherel transform provides us with the
	spectral resolution of the unbounded operator $f \mapsto f'$.
	This observation is crucial, for instance, in the study of partial differential equations and in the
	development of pseudo-differential operators.  
	
	We now consider analogues of the identity \eqref{eq-FPT} for the super-Fourier transform $\mathscr{F}:\mathcal{S} \to \mathcal{S}$.  
	Recalling that the unitary matrix $U^*$, defined by \eqref{eq-U}, is the matrix representation 
	of $\mathscr{F}$ with respect to the orthonormal basis $\{s_1,s_2,\ldots,s_N\}$ of $\mathcal{S}$,
	we identify operators on $\mathcal{S}$ with their matrix representations with respect to this basis.
	We therefore seek to classify all $N \times N$ matrices $T$ that satisfy
	\begin{equation}\label{eq-FTDF}
		 T U = U D
	\end{equation}
	for some diagonal matrix $D$.  A complete characterization of such matrices is provided by our next
	theorem, which is inspired by a result from classical character theory 
	\cite[Section 33]{CR62}, \cite[Lem.~3.1]{CKS},  \cite[Lem.~4]{Ku75}.
	Portions of the following proof originate in \cite{RSS}, where a notion of superclass arithmetic is
	developed for arbitrary finite groups.  However, in that more general context the corresponding conclusions
	are not as strong as those given below.

	\begin{Theorem}\label{TheoremPowerful}
		Let $\Gamma = \Gamma^T$ be a subgroup of $GL_d(\Z/n\Z)$, let $\X = \{X_1,X_2,\ldots,X_N\}$
		denote the set of superclasses induced by the action of $\Gamma$ on $(\Z/n\Z)^d$, and let $\sigma_1,\sigma_2,\ldots,\sigma_N$
		denote the corresponding supercharacters.
		For each fixed $z$ in $X_k$, let $c_{i,j,k}$ denote the number of solutions $(x_i,y_j) \in X_i \times X_j$ to the equation $x+y = z$.
		\begin{enumerate}\addtolength{\itemsep}{0.5\baselineskip}
			\item $c_{i,j,k}$ is independent of the representative $z$ in $X_k$ which is chosen.
			\item The identity
				\begin{equation}\label{eq-sijk}
					\sigma_i(X_{\ell}) \sigma_j(X_{\ell}) = \sum_{k=1}^N c_{i,j,k} \sigma_k(X_{\ell})
				\end{equation}			
				holds for $1\leq i,j,k,\ell \leq N$.
			\item The matrices $T_1,T_2,\ldots,T_N$, whose entries are given by
				\begin{equation*}
					[T_i]_{j,k} = \frac{ c_{i,j,k} \sqrt{ |X_k| } }{ \sqrt{ |X_j|} },
				\end{equation*}
				each satisfy 
				\begin{equation}\label{eq-TUUD}
					T_i U = U D_i,
				\end{equation}
				where 
				\begin{equation}\label{eq-DM}
					D_i = \operatorname{diag}\big(\sigma_i(X_1), \sigma_i(X_2),\ldots, \sigma_i(X_N) \big).
				\end{equation}
				
			\item Each $T_i$ is a normal matrix (i.e., $T_i^*T_i = T_i T_i^*$) and
				the set $\{T_1,T_2,\ldots,T_N\}$ forms a basis for the algebra $\mathcal{A}$ of all $N \times N$ matrices 
				$T$ such that $U^*TU$ is diagonal.
		\end{enumerate}
  	\end{Theorem}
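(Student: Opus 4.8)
The plan is to prove the four claims in sequence, as each relies on its predecessors. For part~(1), I would establish the independence of $c_{i,j,k}$ from the representative $z \in X_k$ by producing an explicit bijection between solution sets. If $z' = Az$ for some $A \in \Gamma$, then $(x,y)\mapsto(Ax,Ay)$ carries any pair $(x,y)\in X_i\times X_j$ with $x+y=z$ to a pair with $Ax+Ay=Az=z'$; since $X_i$ and $X_j$ are $\Gamma$-orbits we have $Ax\in X_i$ and $Ay\in X_j$, and the map is invertible via $A^{-1}$. Hence the count is the same for $z$ and $z'$.

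For part~(2), I would expand the product directly from the definition \eqref{eq-Supercharacter},
\[
\sigma_i(\vec{\xi})\,\sigma_j(\vec{\xi})
= \sum_{x\in X_i}\sum_{y\in X_j} e\!\left(\frac{(x+y)\cdot\vec{\xi}}{n}\right),
\]
and then reorganize the double sum according to the orbit $X_k$ containing $x+y$. By part~(1), exactly $c_{i,j,k}$ pairs $(x,y)$ produce each $z\in X_k$, so the right-hand side becomes $\sum_{k=1}^N c_{i,j,k}\sum_{z\in X_k} e(z\cdot\vec{\xi}/n) = \sum_{k=1}^N c_{i,j,k}\,\sigma_k(\vec{\xi})$. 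Specializing $\vec{\xi}$ to a representative of $X_\ell$ yields \eqref{eq-sijk}.

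For part~(3), I would verify \eqref{eq-TUUD} by an entrywise computation. Plugging the definitions of $T_i$ and $U$ into $(T_iU)_{j,\ell}$, the factors $\sqrt{|X_k|}$ cancel and part~(2) collapses $\sum_k c_{i,j,k}\,\sigma_k(X_\ell)$ to $\sigma_i(X_\ell)\sigma_j(X_\ell)$; comparing with $(UD_i)_{j,\ell}=U_{j,\ell}\,\sigma_i(X_\ell)$ shows the two entries agree.

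Finally, part~(4) rests on the unitarity of $U$ established before Lemma~\ref{LemmaU}. From \eqref{eq-TUUD} we obtain $T_i = UD_iU^{*}$, so $T_i^{*}=UD_i^{*}U^{*}$ and, because diagonal matrices commute, $T_iT_i^{*}=UD_iD_i^{*}U^{*}=UD_i^{*}D_iU^{*}=T_i^{*}T_i$, giving normality. For the basis statement, the map $T\mapsto U^{*}TU$ is a linear isomorphism carrying $\mathcal{A}$ onto the space of diagonal matrices, so $\dim\mathcal{A}=N$; since each $T_i=UD_iU^{*}$ lies in $\mathcal{A}$, it suffices to show the $T_i$ are linearly independent, equivalently that the $D_i$ are, equivalently that the rows $(\sigma_i(X_1),\ldots,\sigma_i(X_N))$ of the supercharacter table are linearly independent. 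I expect this last point to be the crux: it is resolved by observing that this table is obtained from the invertible matrix $U$ of \eqref{eq-U} by scaling each row and column by a nonzero factor, hence is itself invertible and has independent rows.
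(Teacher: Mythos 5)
Your proposal is correct, and parts (3) and (4) coincide with the paper's argument (entrywise verification of $T_iU=UD_i$ via a diagonal conjugation, and linear independence of the $D_i$ from the invertibility of the supercharacter table $W=[\sigma_j(X_k)]$, which is a row-and-column rescaling of the unitary matrix $U$). Where you genuinely diverge is in parts (1) and (2). The paper does not prove (1) at all—it cites Diaconis--Isaacs and the reference \cite{RSS}—whereas you supply the short equivariance argument: $(x,y)\mapsto(Ax,Ay)$ is a bijection between the solution sets over $z$ and $z'=Az$ because $X_i$ and $X_j$ are $\Gamma$-orbits; this is complete and correct. For (2), the paper works in the group algebra $\C[(\Z/n\Z)^d]$: it forms the superclass sums $\tilde X_i$, establishes $\tilde X_i\tilde X_j=\sum_k c_{i,j,k}\tilde X_k$, applies an irreducible character $\psi_{\vec x}$, and then needs the auxiliary identity \eqref{eq-ILO} together with the reciprocity formula \eqref{eq-ReciprocityFormula} to convert $\psi_{\vec x}(\tilde X_j)$ into $\sigma_j(X_\ell)$. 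You instead multiply the two character sums directly, use $\psi_{\vec x}(\vec\xi)\psi_{\vec y}(\vec\xi)=\psi_{\vec x+\vec y}(\vec\xi)$, and regroup the double sum by the orbit containing $x+y$, invoking (1) to pull out the constant $c_{i,j,k}$ on each orbit. This is dual to the paper's computation (you multiply supercharacters where the paper multiplies superclass sums), and it is shorter: it avoids the group-algebra formalism and the reciprocity step entirely, and it even yields \eqref{eq-sijk} as an identity of functions on all of $(\Z/n\Z)^d$ rather than only on superclass representatives. The trade-off is that the paper's formulation is the one that survives in the nonabelian superclass-arithmetic setting of \cite{RSS}, where the supercharacters are no longer products of one-dimensional characters; in the abelian case at hand your route is the cleaner one. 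You also explicitly check normality of the $T_i$ from $T_i=UD_iU^{*}$, a point the paper's proof passes over in silence.
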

	
	\begin{proof}
		The fact that the structure constants $c_{i,j,k}$ do not depend upon the representative $z$ of $X_k$ is mentioned in
		passing in \cite[Cor.~2.3]{DiIs08}; a complete proof can be found in \cite{RSS}.  Let us now focus our attention on 
		\eqref{eq-sijk}.
		We work in the group algebra $\C[(\Z/n\Z)^d]$, noting that each character of $(\Z/n\Z)^d$
		extends by linearity to a function on the entire group algebra.  For each superclass $X_i$ we let
		\begin{equation*}
			\tilde{X}_i = \sum_{\vec{x}\in X_i} \vec{x}
		\end{equation*}
		denote the corresponding superclass sum in $\C[(\Z/n\Z)^d]$, remarking for emphasis
		that $\tilde{X}_i$ is to be regarded as a formal sum of the elements of $X_i$.
		It is easy to see that these superclass sums satisfy
		\begin{equation}\label{eq-ijk}
			\tilde{X}_i \tilde{X}_j = \sum_{k=1}^N c_{i,j,k} \tilde{X}_k.
		\end{equation}
		
		We now claim that for $1 \leq j \leq N$, the irreducible characters \eqref{eq-Psi} satisfy
		\begin{equation}\label{eq-SVS}
			\psi_{\vec{x}}( \tilde{X}_j ) = \psi_{\vec{x}'}( \tilde{X}_j ),
		\end{equation}
		whenever $\vec{x}$ and $\vec{x}'$ belong to the same superclass.
		Indeed, under this hypothesis there exists a matrix $A$ in $\Gamma$ such that $\psi_{\vec{x}} = \psi_{A^{-T}\vec{x}'}$, whence
		\begin{equation*}
			\psi_{\vec{x}}( \tilde{X}_j )
			= \!\!\sum_{\vec{v} \in X_j} \!\!\psi_{\vec{x}}(\vec{v}) 
			= \!\!\sum_{\vec{v} \in X_j} \!\!\psi_{A^{-T}\vec{x}'}(\vec{v}) 
			= \!\!\sum_{\vec{v} \in X_j} \!\!\psi_{\vec{x}'}(A^{-1}\vec{v}) 
			= \!\!\sum_{\vec{v}' \in X_j} \!\!\psi_{\vec{x}'}(\vec{v}') 
			= \psi_{\vec{x}'}( \tilde{X}_j)
		\end{equation*}
		since $X_j$ is stable under the action of $\Gamma$.  
		If $\vec{x}$ belongs to $X_{\ell}$, then \eqref{eq-SVS} implies that
		\begin{equation}\label{eq-ILO}
			|X_{\ell}| \psi_{\vec{x}}(\tilde{X}_j) 
			= \sum_{\vec{x}'\in X_{\ell}} \psi_{\vec{x}'}( \tilde{X}_j) 
			= \sigma_{\ell}( \tilde{X}_j) 
			= |X_j| \sigma_{\ell}(X_j)
		\end{equation}
		since $\sigma_{\ell}$ is constant on the superclass $X_j$.
		Applying $\psi_{\vec{x}}$ to \eqref{eq-ijk} we obtain
		\begin{equation*}
			\psi_{\vec{x}}(\tilde{X}_i) \psi_{\vec{x}}( \tilde{X}_j) = \sum_{k=1}^N c_{i,j,k} \psi_{\vec{x}}(\tilde{X}_k),
		\end{equation*}
		from which
		\begin{equation*}
			\frac{ |X_i| \sigma_{\ell}(X_i)}{|X_{\ell}|} \cdot\frac{ |X_j| \sigma_{\ell}(X_j)}{|X_{\ell}|}
			= \sum_{k=1}^N c_{i,j,k} \frac{ |X_k| \sigma_{\ell}(X_k)}{|X_{\ell}|}
		\end{equation*}
		follows by \eqref{eq-ILO}.  In light of the reciprocity formula \eqref{eq-ReciprocityFormula},
		we conclude that \eqref{eq-sijk} holds for $1\leq i,j,k,\ell\leq N$.
		
		In terms of matrices, we see that \eqref{eq-sijk} is simply the $(j,\ell)$ entry of the matrix equation
		$M_i W = W D_i$, in which $M_i = [c_{i,j,k}]_{j,k=1}^N$ and $W = [\sigma_j(X_k)]_{j,k=1}^N$.
		Conjugating all of the matrices involved by an appropriate diagonal matrix yields 
		\eqref{eq-TUUD}.

		Since we are dealing with $N \times N$ matrices, it is clear that the algebra $\mathcal{A}$ of all $N \times N$
		matrices $T$ such that $U^*TU$ is diagonal has dimension at most $N$.  Because 
		the $D_i$ are linearly independent (this follows from the fact that the rows of $W$ are linearly independent since $W$ is similar to 
		the unitary matrix $U$), 
		it follows that $\mathcal{A} = \operatorname{span}\{T_1,T_2,\ldots,T_N\}$.
	\end{proof}

\section{Exponential sums}\label{SectionExponential}
	In this section we examine a number of examples of the preceding machinery.	
	In particular, we focus on several classes of exponential sums that are relevant in
	number theory (e.g., Gauss, Ramanujan, Heilbronn, and Kloosterman sums).
	Although it is certainly possible to explore the specific properties of these sums using
	Theorems \ref{TheoremF} and \ref{TheoremPowerful} (see \cite{RSS, HES}),
	that is not our purpose here.
	We simply aim to demonstrate how such sums arise in a natural and unified
	manner from the theory of supercharacters.

\subsection{Maximum collapse}
		If $G = (\Z/n\Z)^d$ and $\Gamma = GL_d(\Z/n\Z)$, then $\X = \Y = \big\{ \{ \vec{0}\}, G \backslash \{ \vec{0}\} \big\}$.
		The corresponding supercharacter table and symmetric unitary matrix are displayed below.
		\begin{equation*}\small
			\begin{array}{|c|cc|}
				\hline
				(\Z/n\Z)^d & \{ \vec{0} \} & G \backslash \{ \vec{0}\}\\
				GL_d(\Z/n\Z)& \vec{0} & (1,1,\ldots,1)\\
				\# & 1 & n^d - 1\\
				\hline
				\sigma_1 & 1 & 1 \\
				\sigma_2 & n^d-1 & -1 \\
				\hline
			\end{array}
			\qquad\qquad\normalsize
			\underbrace{
			\frac{1}{\sqrt{n^d}}
			\begin{bmatrix}
				1 & \sqrt{n^d-1} \\
				\sqrt{n^d-1} & -1
			\end{bmatrix}
			}_U
		\end{equation*}
		In this setting the uncertainty principle \eqref{eq-Uncertainty} takes the form 
		$2 \leq |\supp f| |\supp \widehat{f}|$, which is obviously sharp.

\subsection{The discrete Fourier transform}
		If $G = \Z/n\Z$ and $\Gamma = \{1\}$, then $\X = \Y = \big\{ \{ x\}: x \in \Z/n\Z\big\}$. 
		The corresponding supercharacter table and associated unitary matrix are displayed below
		(where $\zeta = \exp(2\pi i /n)$).
		\begin{equation*}\footnotesize
			\begin{array}{|c|ccccc|}
				\hline
				\Z/n\Z & \{0\} & \{1\} & \{2\} & \cdots & \{n-1\} \\
				\{1\} & 0 & 1 & 2 & \cdots & n-1 \\
				\# & 1 &1 & 1 & \cdots & 1 \\
				\hline
				\sigma_0 & 1 & 1 & 1 & \cdots & 1 \\
				\sigma_1 & 1 & \zeta & \zeta^2 & \cdots & \zeta^{n-1} \\
				\sigma_2 & 1 & \zeta^2 & \zeta^4 & \cdots & \zeta^{2(n-1)} \\
				\vdots & \vdots & \vdots & \vdots & \ddots & \vdots \\
				\sigma_{n-1} & 1 & \zeta^{n-1} & \zeta^{2(n-1)} & \cdots & \zeta^{(n-1)^2} \\
				\hline
			\end{array}
			\qquad
			\underbrace{
			\frac{1}{\sqrt{n}}\!\!
			\begin{bmatrix}
				1 & 1 & 1 & \cdots & 1 \\
				1 & \zeta & \zeta^2 & \cdots & \zeta^{n-1} \\
				1 & \zeta^2 & \zeta^4 & \cdots & \zeta^{2(n-1)} \\
				\vdots & \vdots & \vdots & \ddots & \vdots \\
				1 & \zeta^{n-1} & \zeta^{2(n-1)} & \cdots & \zeta^{(n-1)^2} \\
			\end{bmatrix}
			}_U
		\end{equation*}
		In particular, $U$ is the \emph{discrete Fourier transform} (DFT) matrix.
		If we agree to identify each superclass $\{x\}$ with the corresponding element $x$ in $\Z/n\Z$, 
		then the super-Fourier transform is simply the discrete Fourier transform
		\begin{equation*}
			[\mathscr{F}f](\xi) = \frac{1}{\sqrt{n}} \sum_{j=1}^n f(j) e^{-2\pi i j \xi/n}
		\end{equation*}
		and \eqref{eq-Uncertainty} is the standard Fourier uncertainty principle:
		$n \leq |\supp f| |\supp \widehat{f}|$.
		More generally, if $G = (\Z/n\Z)^d$ and $\Gamma = \{I\}$, then every superclass is again a singleton 
		whence \eqref{eq-Uncertainty} yields the familiar estimate $|G| \leq |\supp f| |\supp \widehat{f}|$
		(see Subsection \ref{SubsectionTao} for a relevant discussion).
		
		Turning our attention toward Theorem \ref{TheoremPowerful}, we find that the matrices
		\begin{equation*}
			[T_i]_{j,k} = 
			\begin{cases}
			0 & \text{if $k - j \neq i$},\\
			1 & \text{if $k - j = i$},
			\end{cases}
		\end{equation*}
		each satisfy $T_i U= U D_i$ where
		$D_i = \operatorname{diag}(1, \zeta^{i}, \zeta^{2i},\ldots, \zeta^{(n-1)i})$.  Moreover,
		the algebra $\mathcal{A}$ generated by the $T_i$ is precisely the algebra of all
		$N \times N$ \emph{circulant matrices}
		\begin{equation*}
			\begin{bmatrix}
				c_0     & c_{N-1} & \cdots  & c_{2} & c_{1}  \\
				c_{1} & c_0    & c_{N-1} &         & c_{2}  \\
				\vdots  & c_{1}& c_0    & \ddots  & \vdots   \\
				c_{N-2}  &        & \ddots & \ddots  & c_{N-1}   \\
				c_{N-1}  & c_{N-2} & \cdots  & c_{1} & c_0 \\
			\end{bmatrix}.
		\end{equation*}

\subsection{The discrete cosine transform}
	If $G = \Z/n\Z$ and $\Gamma = \{ \pm 1\}$, then
	\begin{equation*}
		\X = 
		\begin{cases}
		\big\{ \{0\},\, \{\pm1\},\, \{ \pm 2\}, \ldots,\{\tfrac{n}{2}\pm 1\},\,  \{ \tfrac{n}{2} \} \big\} & \text{if $n$ is even},\\[5pt]
		\big\{ \{0\},\, \{\pm1\},\, \{ \pm 2\}, \ldots,\{\tfrac{n\pm 1}{2}\} \big\} & \text{if $n$ is odd}.
		\end{cases}
	\end{equation*}
	The corresponding supercharacter tables are
	\begin{equation*}\small
		\begin{array}{| c | c c c c c c |}
			\hline
			\Z/n\Z 	& \{0\} 	& \{1,-1\} 	& \{2,-2\} 	& \ldots	& \{\frac{n}{2}-1,\frac{n}{2}+1\} 	& \{\frac{n}{2}\}\\[3pt]
			\{\pm 1\}	&0&1&2&\ldots&\frac{n}{2}-1&\frac{n}{2}\\[3pt]
			\# & 1 & 2 & 2 &\ldots & 2 & 1\\[3pt]
			\hline
			\sigma_1 	& 1 		& 1 	& 1 	& \ldots	& 1 		& 1\\[3pt]
			\sigma_2	& 2	& 2\cos \frac{2\pi}{n} & 2\cos \frac{4\pi}{n} &\ldots & 2\cos \frac{(n-2)\pi}{n} & -2\\[3pt]
			\sigma_3	& 2	& 2\cos \frac{4\pi}{n}& 2\cos \frac{8\pi}{n} &\ldots & 2\cos \frac{2(n-2)\pi}{n} & 2\\[3pt]
			\vdots &\vdots&\vdots&\vdots&\ddots&\vdots&\vdots\\[3pt]
			\sigma_{\frac{n}{2}} & 2& 2\cos \frac{(n-2)\pi}{n}&2\cos \frac{2(n-2)\pi}{n}&\ldots &2\cos \frac{2(\frac{n}{2}-1)^2\pi}{n}&2(-1)^{\frac{n}{2}-1}\\[3pt]
			\sigma_{\frac{n}{2}+1} & 1 & -1 & 1 &\ldots & (-1)^{\frac{n}{2}-1} & (-1)^{\frac{n}{2}} \\[3pt]
			\hline
		\end{array}
	\end{equation*}
	for $n$ even and
	\begin{equation*}\footnotesize
		\begin{array}{| c | c c c c c c |}
			\hline
			\Z/n\Z 	& \{0\} 	& \{1,-1\} 	& \{2,-2\} 	& \ldots	& \{\lfloor \frac{n}{2} \rfloor -1,\lceil \frac{n}{2} \rceil +1\}  	& \{\lfloor \frac{n}{2} \rfloor, \lceil \frac{n}{2} \rceil \}\\[3pt]
			\{1,-1\}	&0&1&2&\ldots&\lfloor \frac{n}{2} \rfloor -1&\lfloor \frac{n}{2} \rfloor\\[3pt]
			\# & 1 & 2 & 2 &\ldots & 2 & 2\\[3pt]
			\hline
			\sigma_1 	& 1 		& 1 	& 1 	& \ldots	& 1 		& 1\\[3pt]
			\sigma_2	& 2	& 2\cos \frac{2\pi}{n} & 2\cos \frac{4\pi}{n} &\ldots & 2\cos \frac{(n-3)\pi}{n} & 2\cos \frac{(n-1)\pi}{n}\\[3pt]
			\sigma_3	& 2	& 2\cos \frac{4\pi}{n}& 2\cos \frac{8\pi}{n} &\ldots & 2\cos \frac{2(n-3)\pi}{n} & 2\cos \frac{2(n-1)\pi}{n}\\[3pt]
			\vdots &\vdots&\vdots&\vdots&\ddots&\vdots&\vdots\\[3pt]
			\sigma_{\lfloor \frac{n}{2} \rfloor } & 2& 2\cos \frac{(n-3)\pi}{n}&2\cos \frac{2(n-3)\pi}{n}&\ldots &2\cos \frac{(n-3)^2\pi}{2n}&2\cos \frac{(n-3)(n-1)\pi}{2n}\\[3pt]
			\sigma_{\lfloor \frac{n}{2} \rfloor+1} & 2 & 2\cos \frac{(n-1)\pi}{n} & 2\cos \frac{2(n-1)\pi}{n} &\ldots & 2\cos \frac{(n-3)(n-1)\pi}{2n} & 2\cos \frac{(n-1)^2\pi}{2n}\\[3pt]
			\hline
		\end{array}
	\end{equation*}
	for $n$ odd.  The corresponding unitary matrix $U$ is a discrete cosine transform (DCT) matrix.
		
\subsection{Gauss sums}\label{SubsectionGauss}
	Let $G = \Z/p\Z$ where $p$ is an odd prime and let $g$ denote
	a primitive root modulo $p$.  We let $\Gamma = \inner{ g^2}$, 
	the set of all nonzero quadratic residues modulo $p$.  
	The action of $\Gamma$ on $G$ results in three superclasses
	$\{0\}$, $\Gamma$, $g\Gamma$,
	with corresponding supercharacter table and symmetric unitary matrix
	\begin{equation*}
		\begin{array}{|c|ccc|}
			\hline
			\Z/p\Z & \{0\} & \Gamma & g\Gamma \\[2pt]
			\inner{g^2} & 1 & \frac{p-1}{2} & \frac{p-1}{2} \\[2pt]
			\hline
			\sigma_1 & 1 & 1 & 1 \\
			\sigma_2 & \frac{p-1}{2} & \eta_0 & \eta_1 \\
			\sigma_3 & \frac{p-1}{2} & \eta_1 & \eta_0 \\
			\hline
		\end{array}
		\qquad\quad
		\underbrace{
		\frac{1}{\sqrt{p}}\!
		\begin{bmatrix}
			1 & \sqrt{ \frac{p-1}{2}} &  \sqrt{ \frac{p-1}{2}} \\
			 \sqrt{ \frac{p-1}{2}} & \eta_0 & \eta_1 \\
			 \sqrt{ \frac{p-1}{2}} & \eta_1 & \eta_0 \\
		\end{bmatrix}
		}_U
	\end{equation*}
	where 
	\begin{equation}\label{eq-Eta}
		\eta_0 = \sum_{h \in \Gamma} e\left( \frac{h}{p} \right),\qquad
		\eta_1 = \sum_{h \in \Gamma} e\left( \frac{gh}{p} \right),
	\end{equation}
	denote the usual quadratic Gaussian periods.  
	
	Clearly the preceding can be generalized to higher-order Gaussian periods in the obvious way \cite{GNGP}.
	If $k|(p-1)$, then we may let $\Gamma = \inner{g^k}$ to obtain the $k+1$ superclasses
	$\{0\},\Gamma, g\Gamma,g^2\Gamma, \ldots, g^{k-1}\Gamma$.  The nontrivial
	superclasses $g^j\Gamma$ each contain $(p-1)/k$ elements, whence \eqref{eq-Uncertainty} yields
	\begin{equation*}
		k+1 = \ceil[\Big]{ \frac{p}{(p-1)/k} }\leq |\supp f| |\supp \widehat{f}|,
	\end{equation*}
	a reasonably strong inequality given that there are only $k+1$ total superclasses.
	
	Let us now return to the quadratic setting $k = 2$ and consider the matrices $T_1,T_2,T_3$
	discussed in Theorem \ref{TheoremPowerful}.  We adopt the labeling scheme $X_1 = \{0\}$,
	$X_2 = \Gamma$, and $X_3 = g\Gamma$.
	Focusing our attention upon $T_2$, we consider
	the constants $c_{2,j,k}$.  A few short computations reveal that the corresponding matrix $[c_{2,j,k}]_{j,k=1}^3$
	of structure constants is given by
	\begin{equation}\label{eq-M}
		\underbrace{
		\megamatrix{0}{1}{0}{ \frac{p-1}{2} }{ \frac{p-5}{4} }{ \frac{p-1}{4} }{0}{ \frac{p-1}{4} }{ \frac{p-1}{4} }
		}_{\text{if $p\equiv 1 \pmod{4}$}}
		\quad\text{or}\quad
		\underbrace{
		\megamatrix{0}{1}{0}{0}{ \frac{p-3}{4} }{ \frac{p+1}{4} }{ \frac{p-1}{2} }{ \frac{p-3}{4} }{ \frac{p-3}{4} }
		}_{\text{if $p\equiv 3 \pmod{4}$}}.
	\end{equation}
	
	For instance, we observe that
	$c_{2,2,2}$ denotes the number of solutions $(x,y)$ in $X_2 \times X_2$ to the equation $x+y = 1$
	(we have selected the representative $z=1$ from the superclass $X_2 = \Gamma$).
	Letting $x = u^2$ and $y = v^2$, the equation $x+y=1$ becomes
	\begin{equation}\label{eq-PythagoreanSquares}
		u^2 + v^2 = 1.
	\end{equation}
	If $t^2 \neq -1$, then one can verify that
	\begin{equation}\label{eq-PythagoreanParameter}
		u = (1 - t^2)(1 + t^2)^{-1}, \qquad v = 2 t( 1 + t^2)^{-1},
	\end{equation}
	is a solution to \eqref{eq-PythagoreanSquares}.  Moreover, every solution $(u,v)$ with $v \neq 0$ to \eqref{eq-PythagoreanSquares} 
	can be parameterized in this manner by setting $t = (1\mp u)v^{-1}$.

	Since $-1$ is a quadratic residue modulo $p$ if and only if $p \equiv 1 \pmod{4}$, we find that \eqref{eq-PythagoreanParameter} 
	produces exactly $p-2$ or $p$ solutions to \eqref{eq-PythagoreanSquares} depending upon whether $p \equiv 1 \pmod{4}$ 
	or $p \equiv 3 \pmod{4}$.   However, we need $x = u^2$ and $y = v^2$ to belong to $X_2=\Gamma$, 
	the set of nonzero quadratic residues in $\Z/p\Z$.  Thus $t = 0, \pm 1$ are ruled out, 
	leaving only $p-5$ (if $p \equiv 1 \pmod{4}$) or $p-3$ (if $p \equiv 3 \pmod{4}$) acceptable values of $t$ 
	that can be used in \eqref{eq-PythagoreanParameter}.  Since there are four choices of sign pairs for 
	$u,v$ leading to the same values of $x,y$, it follows that 
	\begin{equation}\label{eq-m11cases}
		c_{2,2,2}=
		\begin{cases}
			\frac{p-5}{4} &\text{ if $p \equiv 1 \pmod{4}$}, \\[5pt]
			\frac{p-3}{4} &\text{ if $p \equiv 3 \pmod{4}$}.
		\end{cases}
	\end{equation}
	The remaining entries of the matrix \eqref{eq-M} can be computed in a similar manner.  To obtain the matrix $T_2$,
	we weight the numbers $c_{2,j,k}$ appropriately to obtain
	\begin{equation}\label{eq-T2}
		T_2=
		\begin{cases}
			\megamatrix{0}{ \sqrt{ \frac{p-1}{2} } }{0}{ \sqrt{ \frac{p-1}{2} } }{ \frac{p-5}{4} }{ \frac{p-1}{4} }{0}{\frac{p-1}{4}}{\frac{p-1}{4}}
			& \text{if $p \equiv 1\pmod{4}$},\\[25pt]
			\megamatrix{0}{ \sqrt{ \frac{p-1}{2} } }{0}{0}{\frac{p-3}{4}}{\frac{p+1}{4}}{ \sqrt{ \frac{p-1}{2} } }{ \frac{p-3}{4} }{ \frac{p-3}{4} }
			& \text{if $p \equiv 3\pmod{4}$}.
		\end{cases}
	\end{equation}
	Now recall that Theorem \ref{TheoremPowerful} asserts that the eigenvalues of $T_2$ are precisely $\frac{p-1}{2}$, $\eta_0$, and $\eta_1$.
	On the other hand, the eigenvalues of \eqref{eq-T2} can be computed explicitly.  Comparing the two results yields 
	\begin{align*}
		\eta_1 =
		\begin{cases}
			\frac{ -1 \pm \sqrt{p} }{2} & \text{if $p \equiv 1 \pmod{4}$}, \\[5pt]
			\frac{-1 \pm i\sqrt{p} }{2} & \text{if $p \equiv 3 \pmod{4}$},
		\end{cases}
		&&\eta_2 =
		\begin{cases}
			\frac{ -1 \mp \sqrt{p} }{2} & \text{if $p \equiv 1 \pmod{4}$}, \\[5pt]
			\frac{-1 \mp i\sqrt{p} }{2} & \text{if $p \equiv 3 \pmod{4}$}.
		\end{cases}
	\end{align*}
	Among other things, this implies the well-known formula
	\begin{equation*}
		|G_p(a)| = 
		\begin{cases}
			p & \text{if $p|a$}, \\
			\sqrt{p} & \text{if $p \nmid a$},
		\end{cases}
	\end{equation*}
	for the magnitude of the quadratic Gauss sum
	\begin{equation*}
		G_p(a) = \sum_{n=0}^{p-1} \exp\left( \frac{2 \pi i a n^2}{p} \right).
	\end{equation*}

\subsection{Kloosterman sums}\label{SubsectionKloosterman}
	In the following we fix an odd prime $p$.  For each pair $a,b$ in $\Z/p\Z$,
	the \emph{Kloosterman sum} $K(a,b)$ is defined by setting
	\begin{equation*}
		K(a,b) := \sum_{\ell =1}^{p-1} e\left(\frac{a\ell+b\ell^{-1} }{p}\right)
	\end{equation*}
	where $\ell^{-1}$ denotes the inverse of $\ell$ modulo $p$.  
	It is easy to see that Kloosterman sums are always real and that the
	value of $K(a,b)$ depends only on the residue classes of $a$ and $b$ modulo $p$.
	In light of the fact that $K(a,b) = K(1,ab)$ whenever $p \nmid a$, we focus our
	attention mostly on Kloosterman sums of the form $K(1,u)$, adopting
	the shorthand $K_u := K(1,u)$ when space is at a premium.
	Let $G = (\Z/p\Z)^2$ and let 
	\begin{equation*}
		\Gamma = \left\{ \minimatrix{u}{0}{0}{u^{-1}} : u \in (\Z/p\Z)^{\times} \right\}.
	\end{equation*}
	Note that the action of $\Gamma$ on $G$ produces the superclasses
	\begin{equation*}
		\begin{array}{rcl}
			X_1 &=& \big\{ (x,x^{-1}) : x \in (\Z/p\Z)^{\times}\big\},\\[3pt]
			X_2 &=& \big\{ (x,2x^{-1}) : x \in (\Z/p\Z)^{\times}\big\},\\[3pt]
			&\vdots& \\
			X_{p-1} &=& \big\{ (x,(p-1)x^{-1}) : x \in (\Z/p\Z)^{\times}\big\},\\[3pt]
			X_p &=& \big\{(0,1),(0,2),\ldots,(0,p-1)\big\} ,\\[3pt]
			X_{p+1} &=& \big\{(1,0),(2,0),\ldots,(p-1,0)\big\} ,\\[3pt]
			X_{p+2} &=& \big\{(0,0)\big\},
		\end{array}
	\end{equation*}
	and the corresponding supercharacter table
	\begin{equation*}\small
		\begin{array}{|c|cccc|cc|c|}
			\hline
			(\Z/p\Z)^2 & X_1 & X_2 & \cdots & X_{p-1} & X_p & X_{p+1} & X_{p+2} \\[2pt] 
			\Gamma & (1,1) & (1,2) & \cdots & (1,p-1) & (0,1) & (1,0) & (0,0) \\[2pt]
			\# & p-1 & p-1 & \cdots & p-1 & p-1 & p-1 & 1  \\[2pt]
			\hline
			\sigma_1 & K_1 & K_2 & \cdots & K_{p-1} & -1 & -1 & p-1 \\[2pt]
			\sigma_2 & K_2 & K_4 & \cdots & K_{2(p-1)} & -1 & -1 & p-1 \\[2pt]
			\vdots & \vdots & \vdots & \ddots & \vdots & \vdots & \vdots & \vdots \\[2pt]
			\sigma_{p-1} & K_{p-1} & K_{2(p-1)} & \cdots & K_{(p-1)^2} & -1 & -1 & p-1 \\[2pt]
			\hline
			\sigma_p & -1 & -1 & \cdots & -1 & p-1 & -1 & p-1 \\[2pt]
			\sigma_{p+1} & -1 & -1 & \cdots & -1 & -1 & p-1 & p-1 \\[2pt]
			\hline
			\sigma_{p+2} & 1 & 1 & \cdots & 1 & 1 & 1 & 1 \\[2pt]
			\hline
		\end{array}	
	\end{equation*}
	Since $X_i = - X_i$ for all $i$, it follows that the permutation matrix $P$ from Lemma \ref{LemmaU}
	equals the identity.  Among other things, this implies that the unitary matrix 
	\begin{equation}\label{eq-KU}\small
		\underbrace{
		\frac{1}{p}
		\left[
		\begin{array}{cccc|cc|c}
			K_1&K_2&\cdots&K_{p-1}&-1&-1&\sqrt{p-1}\\[2pt]
			K_2&K_4&\cdots&K_{2(p-1)}&-1&-1&\sqrt{p-1}\\[2pt]
			\vdots&\vdots&\ddots&\vdots&\vdots&\vdots&\vdots\\[2pt]
			K_{p-1}&K_{2(p-1)}&\cdots&K_{(p-1)^2}&-1&-1&\sqrt{p-1}\\[2pt]
			\hline
			-1&-1&\cdots&-1& p-1&-1&\sqrt{p-1}\\[2pt]
			-1&-1&\cdots&-1&-1& p-1&\sqrt{p-1}\\[2pt]
			\hline
			\sqrt{p-1}&\sqrt{p-1}&\cdots &\sqrt{p-1}&\sqrt{p-1}&\sqrt{p-1}&1\\[2pt]
		\end{array}
		\right]
		}_U
	\end{equation}		
	is real and symmetric (i.e., $U^2 = I$).
	Moreover, every nontrivial orbit contains exactly $p-1$ elements whence
	\begin{equation*}
		p+2 \leq |\supp f | |\supp \widehat{f} |,
	\end{equation*}
	since $p+1 < p^2/(p-1) <p+2$.  In light of the fact that $|\X|=p+2$, the preceding inequality
	is again quite respectable.

	We remark that the matrix \eqref{eq-KU} is precisely the unitary matrix \cite[eq.~(3.13)]{CKS}, from which dozens of identities 
	for Kloosterman sums may be derived.  The article \cite{CKS} employs the classical character theory of a
	somewhat contrived $4 \times 4$ non-commutative matrix group to obtain the unitarity of this matrix.  We are able to
	accomplish this in less than a page using supercharacter theory.  
	The matrices $T_i$, their remarkable combinatorial properties, and their applications
	are treated in great detail in \cite{CKS}.  We refer the reader there for more information.

\subsection{Heilbronn sums}
	For $p$ an odd prime, the expression
	\begin{equation*}
		H_p(a) = \sum_{\ell=1}^{p-1} e\left( \frac{a\ell^p}{p^2} \right)
	\end{equation*}
	is called a \emph{Heilbronn sum}.  Since $x^p \equiv y^p \pmod{p^2}$ if and only if $x \equiv y \pmod{p}$,
	\begin{equation*}
		\Gamma  = \big\{ 1^p , 2^p,\ldots, (p-1)^p \big\}
	\end{equation*}
	is a subgroup of $(\Z/p^2\Z)^{\times}$ of order $p-1$.  
	Letting $\Gamma$ act upon $G=\Z/p^2\Z$ by multiplication, we obtain the orbits
	\begin{equation*}
		X_1 = g\Gamma,\quad
		X_2 = g^2 \Gamma,\ldots,\quad
		X_{p-1} = g^{p-1} \Gamma,\quad
		X_p = \Gamma,
	\end{equation*}
	\begin{equation*}
		X_{p+1} = \{p,2p,\ldots,(p-1)p\},\quad
		X_{p+2} = \{0\},
	\end{equation*}
	in which $g$ denotes a fixed primitive root modulo $p^2$.	
	For $1 \leq i,j \leq p$, we have
	\begin{equation*}
		\sigma_i(X_j) = \sum_{\ell=1}^{p-1} e\left( \frac{g^{j} (g^{i} \ell^p)}{p^2}\right)
		= \sum_{\ell=1}^{p-1} e\left( \frac{g^{i+j}  \ell^p}{p^2}\right)
		= H_p(g^{i+j}),
	\end{equation*}
	yielding the supercharacter table
	\begin{equation*}\small
		\begin{array}{|c || cc cc|cc |}
			\hline 
			\Z/p^2 \Z& X_1 & X_2 & \cdots & X_p & X_{p+1} & X_{p+2} \\[2pt] 
			\Gamma & g\Gamma & g^2 \Gamma & \cdots & \Gamma & \{p,\ldots,(p-1)p\} & \{0\} \\[2pt]
			\# & p-1 & p-1 & \cdots & p-1 & p-1 & 1 \\[2pt]
			\hline\hline
			\sigma_{1} & H_p(1) & H_p(g) & \cdots & H_p(g^{p-1}) & -1 & p-1\\[2pt]
			\sigma_{2} & H_p(g) & H_p(g^2) & \cdots & H_p(1) & -1 & p-1\\[2pt]
			\vdots & \vdots & \vdots & \iddots & \vdots & -1 & p-1\\[2pt]
			\sigma_{p} & H_p(g^{p-1}) & H_p(1) & \cdots & H_p(g^{p-2}) & -1 & p-1\\ \hline
			\sigma_{p+1} & -1 & -1 & \cdots & -1 & p-1 & p-1\\[2pt]
			\sigma_{p+2} & 1 & 1 & \cdots & 1 & 1  & 1\\ \hline
		\end{array}
	\end{equation*}
	A detailed supercharacter approach to the algebraic properties of Heilbronn sums can be found in \cite{HES}.
	
\subsection{Ramanujan sums}\label{SubsectionRamanujan}
	For integers $n,x$ with $n \geq 1$, the expression
	\begin{equation}\label{eq-RamanujanSumDefinition}
		c_n(x) = \sum_{ \substack{ j = 1 \\ (j,n) = 1} }^n e\left( \frac{jx}{n} \right)
	\end{equation}
	is called a \emph{Ramanujan sum} \cite[Paper 21]{Ramanujan} (see \cite{RSS} for historical references).
	To generate Ramanujan sums as supercharacter values, we first let
	$G = \Z/n\Z$ and $\Gamma = (\Z/n\Z)^{\times}$, observing that
	there exists a $u$ in $\Gamma$ such that $au = b$
	if and only if $(a,n)=(b,n)$.  	
	Let $d_1,d_2,\ldots,d_N$ denote the positive divisors of $n$ and note that 
	the action of $\Gamma$ on $G$ yields the orbits
	\begin{equation*}
		X_i = \{ x : (x,n) = n/d_i \},
	\end{equation*}
	each of size $\phi(d_i)$, and corresponding supercharacters
	\begin{equation}\label{eq-SIDF}
		\sigma_i(\xi) 
		= \sum_{x \in X_i} \psi_x(\xi)
		= \sum_{\substack{ j=1 \\ (j,n)=\frac{n}{d_i}}}^n e\left( \frac{j\xi}{n} \right)
		= \sum_{\substack{k=1\\(k,d_i)=1}}^{d_i} e\left( \frac{k\xi}{d_i} \right) = c_{d_i}(\xi)
	\end{equation}
	(here $\phi$ denotes the Euler totient function).
	The associated supercharacter table is displayed below.
	\begin{equation*}\small
		\begin{array}{|c|cccc|}
			\hline
			\Z/n\Z& X_1 & X_2 & \cdots & X_N \\[2pt]
			(\Z/n\Z)^{\times} & n/d_1 & n/d_2 & \cdots & n/d_N \\[2pt]
			\# & \phi(d_1) & \phi(d_2) & \cdots & \phi(d_N) \\[2pt]
			\hline
			\sigma_1 & c_{d_1}( \frac{n}{d_1}) &  c_{d_1}( \frac{n}{d_2}) & \cdots &  c_{d_1}( \frac{n}{d_N}) \\[2pt]
			\sigma_2 & c_{d_2}( \frac{n}{d_1}) &  c_{d_2}( \frac{n}{d_2}) & \cdots &  c_{d_2}( \frac{n}{d_N}) \\[2pt]
			\vdots & \vdots & \vdots & \ddots & \vdots \\
			\sigma_N & c_{d_N}( \frac{n}{d_1}) &  c_{d_N}( \frac{n}{d_2}) & \cdots &  c_{d_N}( \frac{n}{d_N}) \\[2pt]
			\hline
		\end{array}
	\end{equation*}
	
	Although we have, by and large, avoided focusing on deriving identities and formulas for
	various classes of exponential sums, we can resist the temptation no longer.
	The fact that $c_n(\xi)$ is a superclass function immediately implies that
	\begin{equation}\label{eq-DependsGCD}
		c_n(x) = c_n\big((n,x)\big)
	\end{equation}
	for all $x$ in $\Z$.  In other words, $c_n(x)$ is an \emph{even function modulo $n$} \cite[p.~79]{McCarthyBook},
	\cite[p.~15]{SchwarzBook}.  A well-known theorem from the study of arithmetic functions
	\cite[Thm.~2.9]{McCarthyBook} asserts that	
	if $f:\Z\to\C$ is an even function modulo $n$, then $f$ can be written uniquely in the form
	\begin{equation*}
		f(x) = \sum_{d|n} \alpha(d) c_d(x)
	\end{equation*}
	where the coefficients $\alpha(d)$ are given by
	\begin{equation*}	
		\alpha(d) = \frac{1}{n} \sum_{k|n} f\left( \frac{n}{k}\right) c_{k}\left( \frac{n}{d} \right).
	\end{equation*}
	We now recognize the preceding as being a special case of super-Fourier inversion.
	In contrast, the standard proof requires several pages of elementary but tedious manipulations.

	As another example, we note that the first statement in Lemma \ref{LemmaU} immediately tells us that
	if $d$ and $d'$ are positive divisors of $n$, then
	\begin{equation}\label{eq-RamanujanReciprocity}
		 c_{d}\left( \frac{n}{d'} \right) \phi(d') = c_{d'} \left( \frac{n}{d} \right) \phi(d). 
	\end{equation}
	For our purposes, the importance of \eqref{eq-RamanujanReciprocity}
	lies in the fact that it provides a one-line proof of \emph{von Sterneck's formula}
	(see \cite[Thm.~272]{Hardy}, \cite[Cor.~2.4]{McCarthyBook}, \cite[p.~40]{SchwarzBook})
	\begin{equation}\label{eq-vonSterneck}
		 c_n(x) = \frac{ \mu\left( \frac{n}{(n,x)}\right) \phi(n) }{ \phi\left( \frac{n}{(n,x)}\right)},
	\end{equation}
	in which $\mu$ denotes the M\"obius $\mu$-function.
	Indeed, simply let $d' = n$ and $d = n/(n,x)$ in \eqref{eq-RamanujanReciprocity} and then use \eqref{eq-DependsGCD}
	and the obvious identity $\mu(k) = c_k(1)$.
	We refer the reader to \cite{RSS} for the derivation of even more identities.

	Unlike Gaussian periods and Kloosterman sums, Ramanujan sums are somewhat
	problematic from the perspective of the uncertainty principle.
	Indeed, the denominator of \eqref{eq-Uncertainty} depends upon the size of the largest orbit,
	namely $\phi(n)$, which is often nearly as large as $n$ (e.g., if $p$ is prime, then $\phi(p)=p-1$).  
	This results in a nearly trivial inequality in \eqref{eq-Uncertainty}. 

\subsection{Symmetric supercharacters}\label{SubsectionSymmetric}
	Let $G = (\Z/n\Z)^d$ and let $\Gamma \cong S_d$ be the set of all $d \times d$
	permutation matrices.  Write $d = q n + r$ where $0 \leq r < n$ and consider the vector
	\begin{equation*}
		\vec{x}_0 = (\!\!\!\!\underbrace{1,2,\ldots, n}_{\text{repeated $q$ times}}\!\!\!\!,1,2,\ldots,r),
	\end{equation*}
	for which
	\begin{equation*}
		|\stab(\vec{x}_0)| = \big((q+1)!\big)^r(q!)^{n-r}= (q!)^n(q+1)^r.
	\end{equation*}
	A brief combinatorial argument confirms that $\vec{x}_0$ minimizes $|\stab(\vec{x})|$ whence
	the largest orbit induced by the action of $\Gamma$ on $G$ has order
	\begin{equation*}
		\frac{d!}{(q!)^n(q+1)^r}.
	\end{equation*}
	It now follows from \eqref{eq-Uncertainty} that
	\begin{equation}\label{eq-SymmetricUncertainty}
		 \ceil[\Big]{\frac{n^d (q!)^n(q+1)^r}{d!} } \leq |\supp f| |\supp \widehat{f}|.
	\end{equation}
	Values of these constants for small $n,d$ are given in Table \ref{TableConstants}.
	\begin{table}
		\begin{equation*}\small
			\begin{array}{|c|cccccccccccc|}
				\hline
				d\backslash n & 1 & 2 & 3 & 4 & 5 & 6 & 7 & 8 & 9 & 10 & 11 & 12\\
				\hline
				1& 1 & 2 & 3 & 4 & 5 & 6 & 7 & 8 & 9 & 10 & 11 & 12 \\
				 2&1 & 2 & 5 & 8 & 13 & 18 & 25 & 32 & 41 & 50 & 61 & 72 \\
				 3&1 & 3 & 5 & 11 & 21 & 36 & 58 & 86 & 122 & 167 & 222 & 288 \\
				 4&1 & 3 & 7 & 11 & 27 & 54 & 101 & 171 & 274 & 417 & 611 & 864 \\
				 5&1 & 4 & 9 & 18 & 27 & 65 & 141 & 274 & 493 & 834 & 1,\!343 & 2,\!074 \\
				 6&1 & 4 & 9 & 23 & 44 & 65 & 164 & 365 & 739 & 1,\!389 & 2,\!461 & 4,\!148 \\
				 7&1 & 4 & 11 & 27 & 63 & 112 & 164 & 417 & 950 & 1,\!985 & 3,\!867 & 7,\!110 \\
				 8&1 & 4 & 12 & 27 & 78 & 167 & 286 & 417 & 1,\!068 & 2,\!481 & 5,\!317 & 10,\!665 \\
				 9&1 & 5 & 12 & 35 & 87 & 223 & 445 & 740 & 1,\!068 & 2,\!756 & 6,\!498 & 14,\!219 \\
				 10&1 & 5 & 15 & 42 & 87 & 267 & 623 & 1,\!184 & 1,\!922 & 2,\!756 & 7,\!148 & 17,\!063 \\
				 11&1 & 5 & 16 & 46 & 118 & 291 & 793 & 1,\!722 & 3,\!145 & 5,\!011 & 7,\!148 & 18,\!614 \\
				 12&1 & 5 & 16 & 46 & 147 & 291 & 925 & 2,\!296 & 4,\!717 & 8,\!351 & 13,\!105 & 18,\!614 \\
				 \hline
			\end{array}
		\end{equation*}
		\caption{Values of the expression on the left-hand side of the
		inequality \eqref{eq-SymmetricUncertainty} for the range $1 \leq n,d \leq 12$.}
		\label{TableConstants}
	\end{table}

	Our interest in the exponential sums arising from the action of $S_d$ stems partly from the experimental observation
	that the plots of individual supercharacters $\sigma_X$ are often pleasing to the eye (see Figure \ref{FigureSymmetric}).
	The study of these plots and their properties is undertaken in \cite{FJ2}.
	\begin{figure}[htb]
		\begin{subfigure}{0.3\textwidth}
			\centering
			\includegraphics[width=\textwidth]{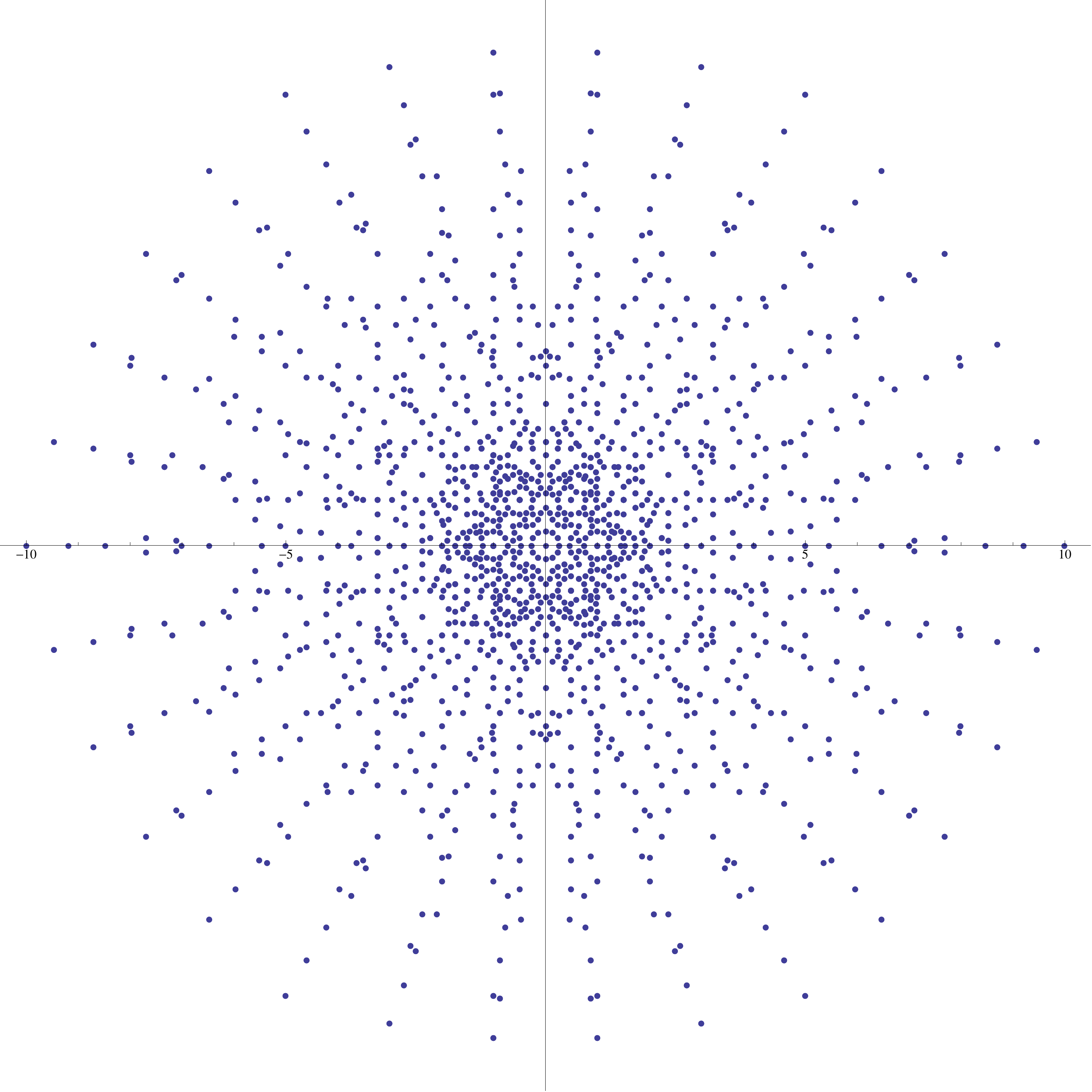}
			\caption{\footnotesize $n=12$, $\vec{x} = (0,0,0,1,1)$}
		\end{subfigure}
		\quad
		\begin{subfigure}{0.3\textwidth}
			\centering
			\includegraphics[width=\textwidth]{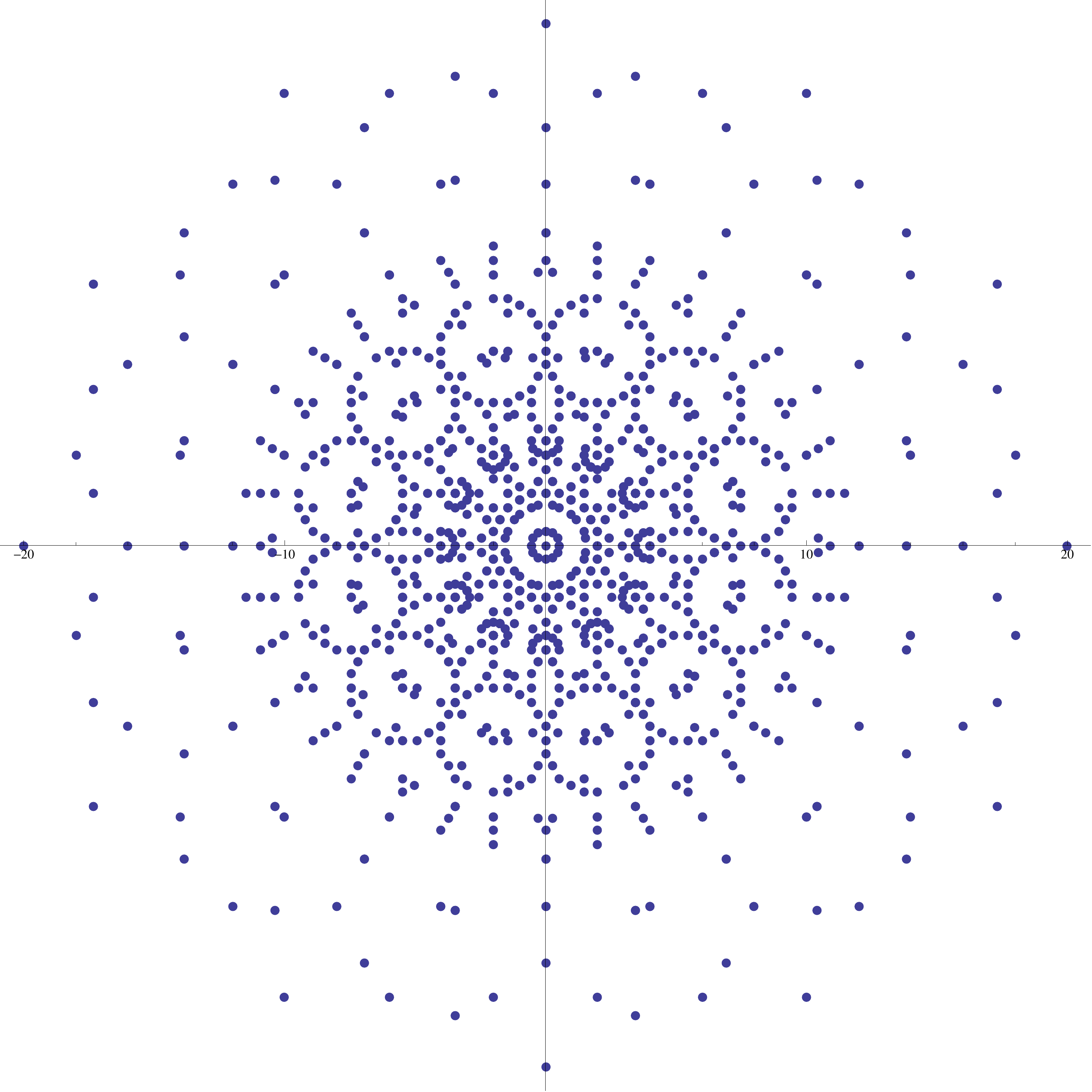}
			\caption{\footnotesize$n=12$, $\vec{x} = (0,0,0,1,6)$}
		\end{subfigure}
		\quad
		\begin{subfigure}{0.3\textwidth}
			\centering
			\includegraphics[width=\textwidth]{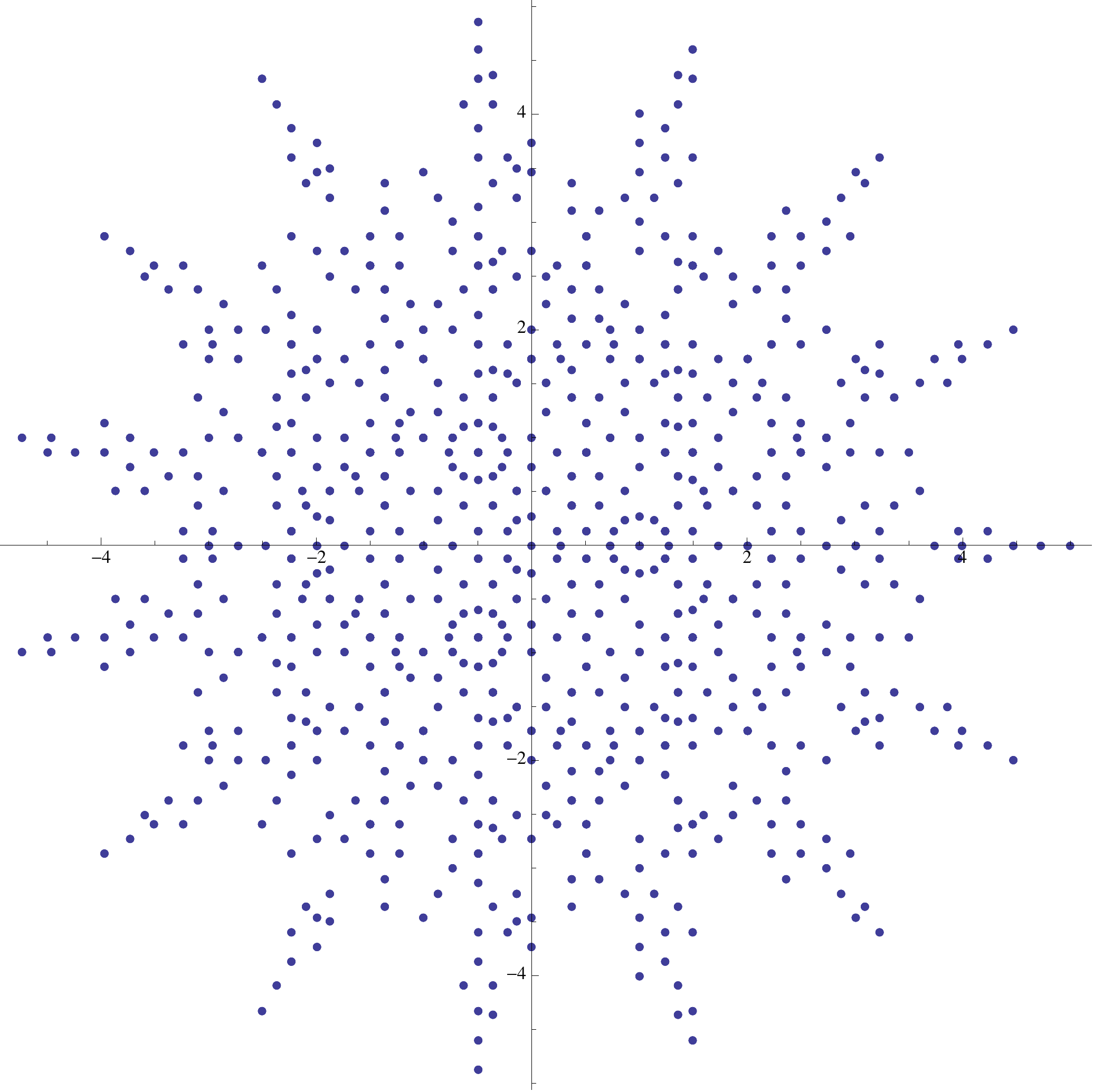}
			\caption{\footnotesize$n\!=\!12$, $\vec{x} = (5,5,5,5,12)$}
		\end{subfigure}	
		\bigskip
		
		\begin{subfigure}{0.3\textwidth}
			\centering
			\includegraphics[width=\textwidth]{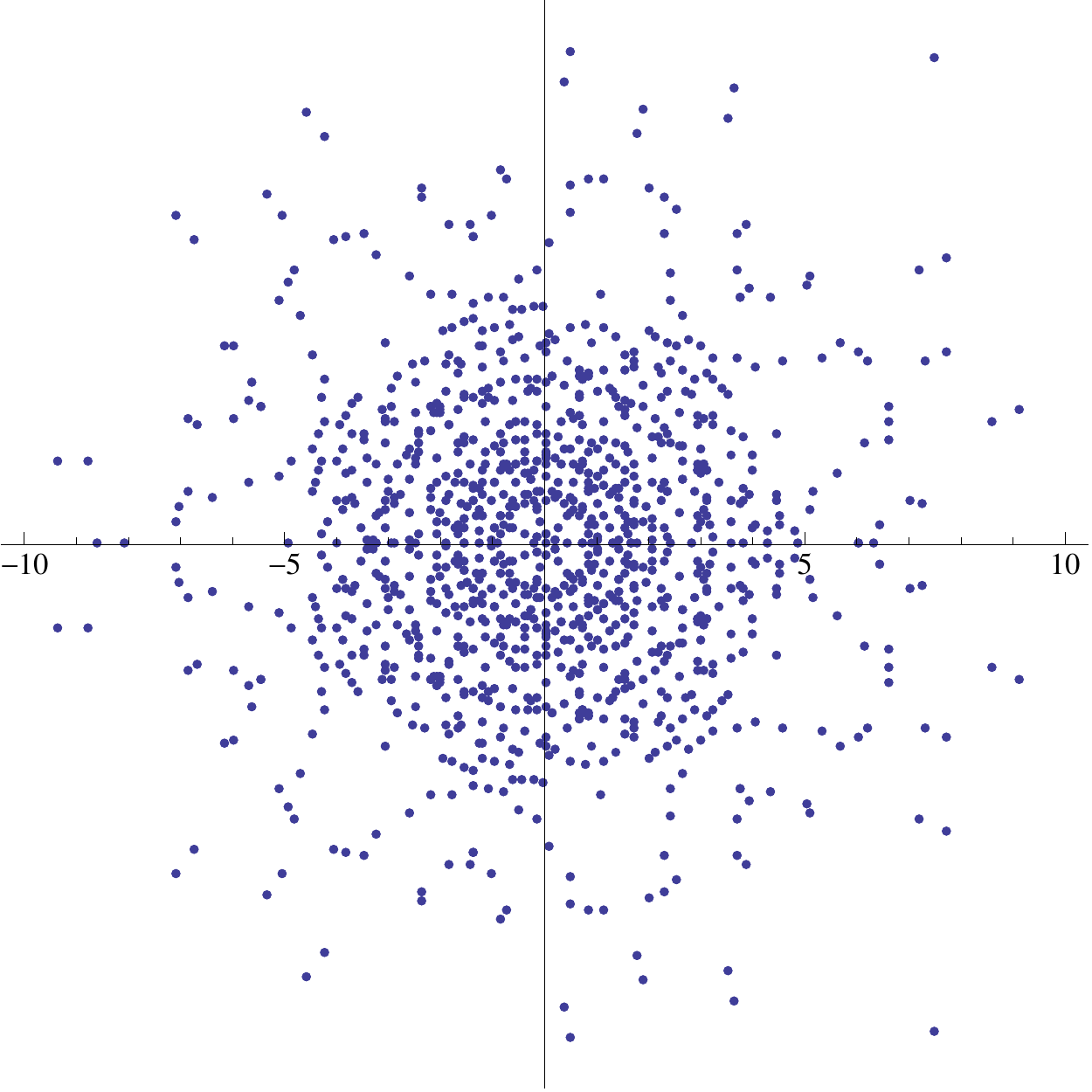}
			\caption{\footnotesize$n=14$, $\vec{x} = (0,1,1,6)$}
		\end{subfigure}
		\quad
		\begin{subfigure}{0.3\textwidth}
			\centering
			\includegraphics[width=\textwidth]{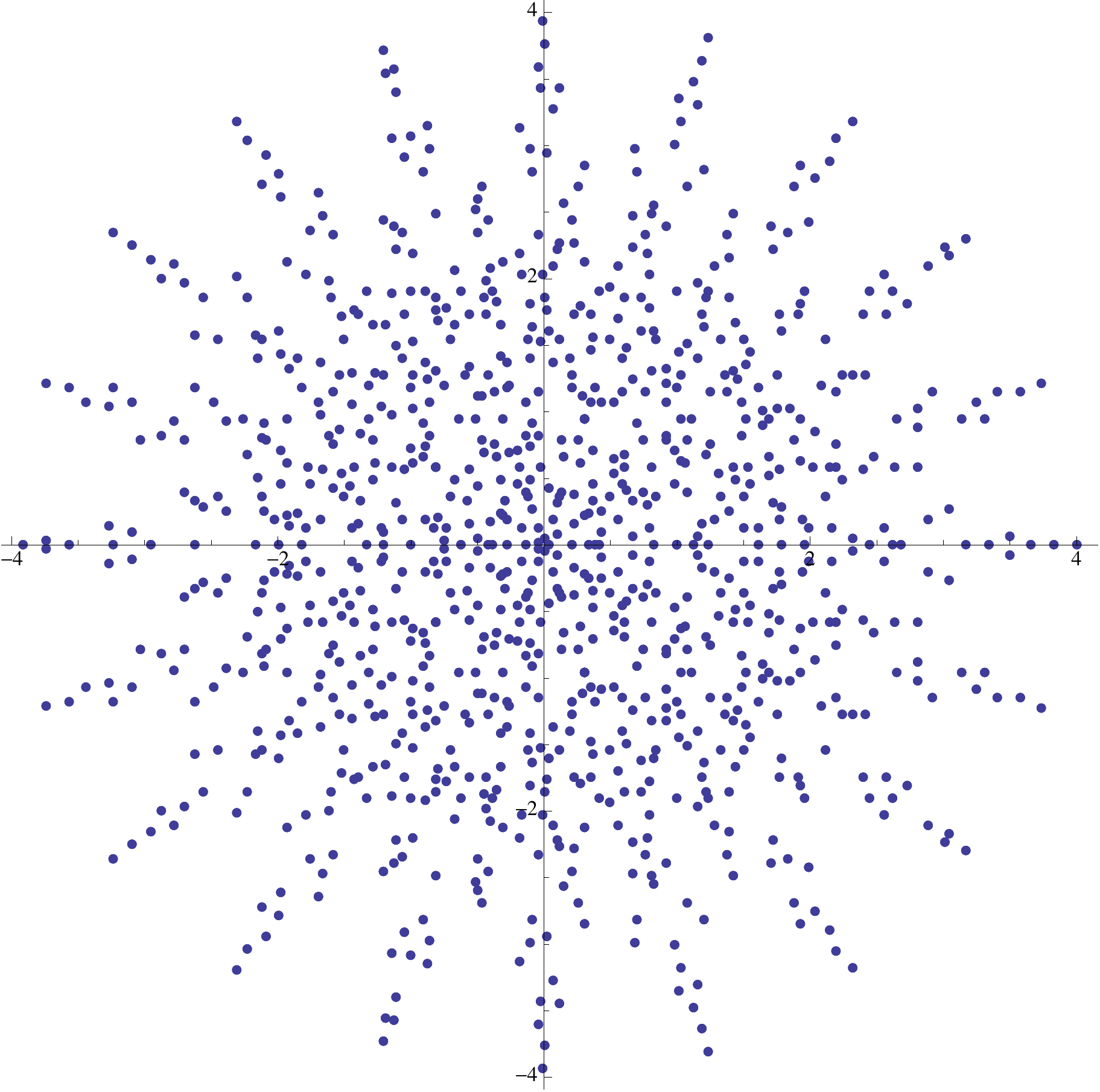}
			\caption{\footnotesize$n=15$, $\vec{x} = (1,1,1,3)$}
		\end{subfigure}
		\quad
		\begin{subfigure}{0.3\textwidth}
			\centering
			\includegraphics[width=\textwidth]{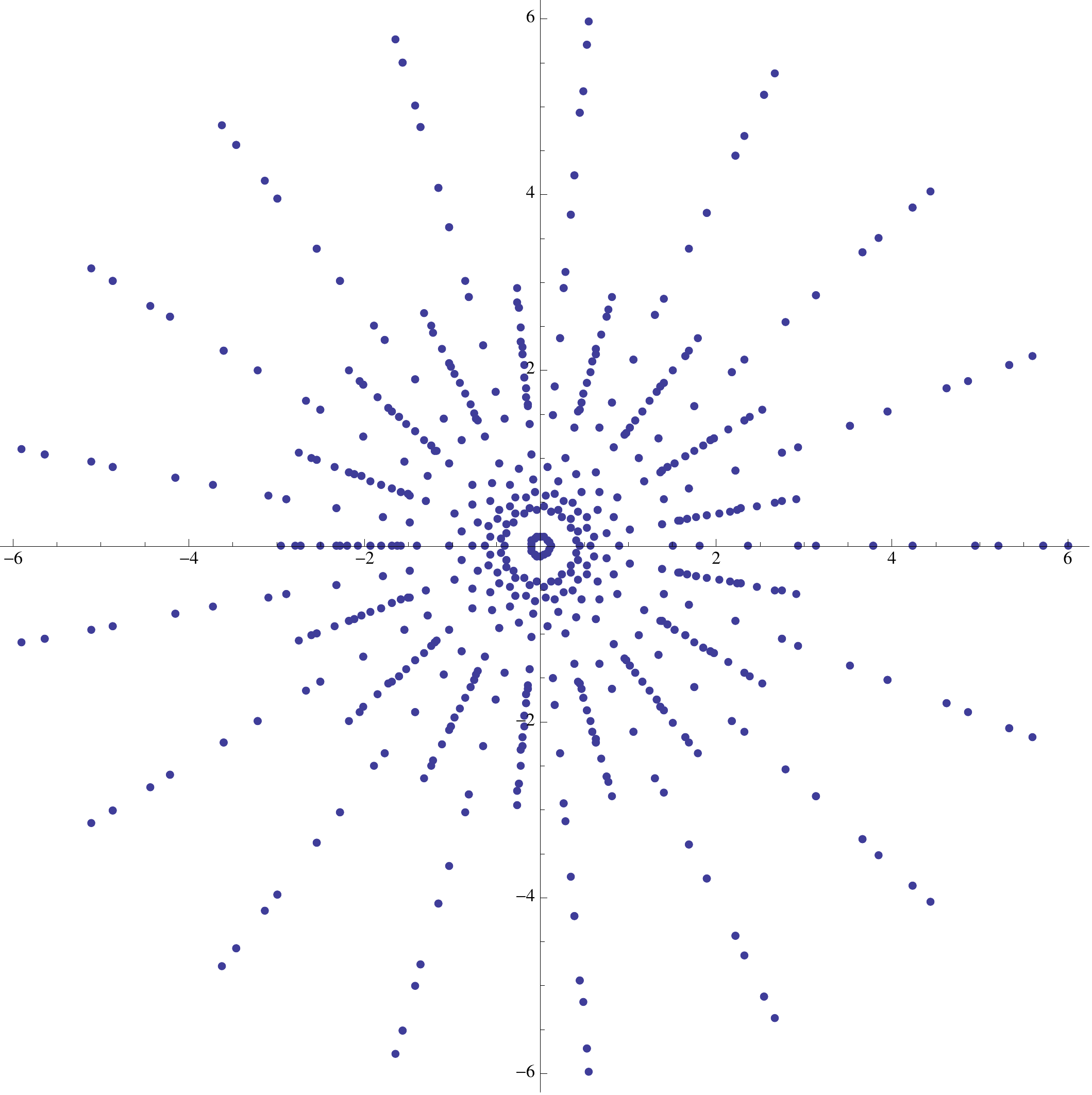}
			\caption{\footnotesize $n=17$, $\vec{x} = (1,2,3)$}
		\end{subfigure}

		\caption{Image of the supercharacter $\sigma_X:(\Z/n\Z)^d\to\C$ where $X = S_d\vec{x}$
		for various $n$, $d$, and $\vec{x}$.}
		\label{FigureSymmetric}
	\end{figure}

\subsection{Upgrading the uncertainty principle?}\label{SubsectionTao}
	Before proceeding, we make a few remarks about T.~Tao's recent strengthening
	of the uncertainty principle for cyclic groups of prime order \cite{Tao} and of the possibility
	of obtaining similar results in the context of super-Fourier transforms.  To be more specific,
	Tao showed that if $p$ is an odd prime, then the classical uncertainty principle for $\Z/p\Z$
	can be improved to
	\begin{equation*}
		p+1 \leq |\supp f| + | \supp \widehat{f}|.
	\end{equation*}

	We argue here, somewhat informally, that such a dramatic improvement cannot be expected
	in the context of supercharacter theories on $(\Z/p\Z)^d$.  Indeed,  Tao's proof relies in a 
	fundamental way on an old result of Chebotar\"ev 
	(see \cite[Lem.~1.3]{Tao} and the references therein), which asserts that every minor of the DFT matrix is invertible.
	This does not, in general, hold for the unitary matrix \eqref{eq-U}, whose adjoint represents the super-Fourier transform
	$\mathscr{F}:\mathcal{S} \to \mathcal{S}$.  
	For instance, the presence of the M\"obius $\mu$-function in von Sterneck's formula 
	\eqref{eq-vonSterneck} indicates that Ramanujan sums frequently vanish.
	Similarly, the unitary matrix obtained in the Kloosterman sum setting has many 
	$2 \times 2$ minors that are singular.

\section{$J$-symmetric groups}\label{SectionJ}

	Throughout the preceding, we have assumed that the group $\Gamma$ which acts on 
	$G= (\Z/n\Z)^d$ is symmetric,
	in the sense that $\Gamma = \Gamma^T$.  However, most of the preceding results also
	hold if $\Gamma$ is merely assumed to be \emph{$J$-symmetric}, meaning that 
	there exists some fixed matrix $J$ in $GL_d(\Z/n\Z)$ such that 
	\begin{equation}\label{eq-J}
		J = J^T,\qquad J\Gamma = \Gamma^T J.
	\end{equation} 
	The reason that we have not pursued this level of generality all along is mostly due to
	the added notational complexity and the fact that plenty of motivating
	examples already exist in the symmetric setting.  
	
	Let us now sketch the modifications necessary to handle the more general setting
	in which $\Gamma$ is $J$-symmetric.
	The first major issue which presents itself is the fact that $\X \neq \Y$.
	As before, the superclasses $Y$ in $\Y$ are orbits $\Gamma \vec{y}$ in $G$ under the action $\vec{y} \mapsto A\vec{y}$
	of $\Gamma$. Identifying the irreducible character $\psi_{\vec{x}}$ with the vector $\vec{x}$ as before,
	the sets $X$ in $\X$ which determine the supercharacters $\sigma_X$
	are orbits under the action $\vec{x} \mapsto A^{-T} \vec{x}$ of $\Gamma$.
	Without the hypothesis that $\Gamma = \Gamma^T$, we cannot conclude that these two actions
	generate the same orbits.  	
	
	Although $\X \neq \Y$ in general, the matrix $J$ furnishes a bijection between $\X$ and $\Y$. 
	Indeed, suppose that $Y = \Gamma \vec{y}$ is the superclass generated by the vector $\vec{y}$
	in $(\Z/n\Z)^d$.  Since $J$ is invertible and $\Gamma$ is a $J$-symmetric group, the set 
	\begin{equation*}
		X = JY  =  J(\Gamma \vec{y}) = \Gamma^{-T} (J\vec{y})
	\end{equation*}
	has the same cardinality as $Y$ and belongs to $\X$.  We therefore enumerate 
	$\X = \{ X_1,X_2,\ldots,X_N\}$ and $\Y = \{Y_1,Y_2,\ldots,Y_N\}$ so that $X_i = JY_i$ 
	and $|X_i| = |Y_i|$ for $i = 1,2,\ldots,N$.  As before, we let $\sigma_i := \sigma_{X_i}$ denote the
	supercharacters associated to the partition $\X$ of $\Irr G$.
	
	In this setting, the unitary matrix \eqref{eq-U} is replaced by the modified matrix
	\begin{equation*}
		U = \frac{1}{\sqrt{n^d}} \left[ \frac{\sigma_i(Y_j) \sqrt{ |Y_j|} }{ \sqrt{ |X_i| } } \right]_{i,j=1}^N,
	\end{equation*}
	whose unitarity can be confirmed using essentially the same computation which we used before.  Showing that $U = U^T$
	requires a little more explanation.  If $Y_i = \Gamma \vec{y}_i$ and $X_j = \Gamma^{-T} \vec{x}_j = \Gamma \vec{x}_j$, then
	\begin{align*}
		| \stab \vec{x}_i |\sigma_i (Y_j)
		&= \sum_{A\in \Gamma} e\left( \frac{A \vec{x}_i \cdot \vec{y}_j }{n} \right) \\
		&= \sum_{A\in \Gamma} e\left( \frac{AJ \vec{y}_i \cdot \vec{y}_j }{n} \right) \\
		&= \sum_{B\in \Gamma} e\left( \frac{JB^T \vec{y}_i \cdot \vec{y}_j }{n} \right) \\
		&= \sum_{B\in \Gamma} e\left( \frac{B^T \vec{y}_i \cdot \vec{x}_j }{n} \right) \\
		&= \sum_{B\in \Gamma} e\left( \frac{B\vec{x}_j \cdot \vec{y}_i }{n} \right) \\
		&= | \stab \vec{x}_j |\sigma_j (Y_i),
	\end{align*}
	where $\vec{y}_j$ denotes the vector $J^{-1} \vec{x}_j$ in $Y_j$.  At this point, the remainder of the proof 
	follows as in the proof of Lemma \ref{LemmaU}.  For each $f:\Y\to\C$, we now define
	\begin{equation*}
		\widehat{f}(X_i) = \frac{1}{\sqrt{n^d}} \sum_{\ell=1}^N f(Y_{\ell}) \overline{(\sigma_{\ell}\circ J)(X_i)},
	\end{equation*}
	so that $\widehat{f}:\X\to\C$.  The corresponding inversion formula is thus
	\begin{equation*}
		f(Y_i) = \frac{1}{\sqrt{n^d}} \sum_{\ell=1}^N \widehat{f}(X_{\ell}) \sigma_{\ell}(Y_i).
	\end{equation*}
	In particular, note that in the $J$-symmetric setting,
	a function and its super-Fourier transform do not share the same domain.
	
\begin{Example}
	Let $p$ be an odd prime, $G = (\Z/p\Z)^2$, and 
	\begin{equation*}
		\Gamma = \left\{ \minimatrix{u}{a}{0}{u} : u \in (\Z/p\Z)^{\times}, a \in \Z/p\Z \right\}.
	\end{equation*}
	Note that $J\Gamma = \Gamma^T J$ where
	\begin{equation*}
		J = \minimatrix{0}{1}{1}{0}.
	\end{equation*}
	The actions $\vec{x} \mapsto A^{-T} \vec{x}$ and $\vec{y} \mapsto A\vec{y}$ of a matrix $A$ in $\Gamma$ yield respective orbits
	\begin{align*}
		X_1 &= \{ (0,0)\},  & Y_1 &= \{ (0,0)\},\\
		X_2 &= \{ (0,u):u\in (\Z/p\Z)^{\times}\}, & Y_2 &= \{ (u,0):u\in (\Z/p\Z)^{\times}\}, \\
		X_3 &= \{ (u,a) : a \in \Z/p\Z, u \in (\Z/p\Z)^{\times}\}, & Y_3 &= \{ (a,u) : a \in \Z/p\Z, u \in (\Z/p\Z)^{\times}\}.
	\end{align*}
	A few simple manipulations now reveal the associated supercharacter table and unitary matrix.
	\begin{equation*}
		\begin{array}{|c|ccc|}
			\hline
			\Z/p\Z & Y_1 & Y_2 & Y_3 \\
			\Gamma & (0,0) & (1,0) & (1,1) \\
			\# & 1 & p-1 & p(p-1)\\
			\hline
			\sigma_1 & 1 & 1 &1 \\
			\sigma_2 & p-1 & p-1 & -1 \\
			\sigma_3 & p(p-1) & -p & 0\\
			\hline
		\end{array}
		\qquad
		\underbrace{
		1/p
		    \begin{bmatrix}
		         1 & \sqrt{p-1} & \sqrt{\left(p-1\right)p} \\
		         \sqrt{p-1} & p-1 & \sqrt{p}\\
		         \sqrt{\left(p-1\right)p}& \sqrt{p} & 0\\
		    \end{bmatrix}
		    }_U
	\end{equation*}
	In particular, observe that $U = U^T$, as expected.
\end{Example}

\bibliography{SESUP}

\providecommand{\bysame}{\leavevmode\hbox to3em{\hrulefill}\thinspace}
\providecommand{\MR}{\relax\ifhmode\unskip\space\fi MR }
\providecommand{\MRhref}[2]{%
  \href{http://www.ams.org/mathscinet-getitem?mr=#1}{#2}
}
\providecommand{\href}[2]{#2}
\begin{thebibliography}{10}

\bibitem{An95}
Carlos A.~M. Andr{\'e}, \emph{Basic characters of the unitriangular group}, J.
  Algebra \textbf{175} (1995), no.~1, 287--319. \MR{MR1338979 (96h:20081a)}

\bibitem{An01}
\bysame, \emph{The basic character table of the unitriangular group}, J.
  Algebra \textbf{241} (2001), no.~1, 437--471. \MR{MR1839342 (2002e:20082)}

\bibitem{An02}
\bysame, \emph{Basic characters of the unitriangular group (for arbitrary
  primes)}, Proc. Amer. Math. Soc. \textbf{130} (2002), no.~7, 1943--1954
  (electronic). \MR{MR1896026 (2003g:20075)}

\bibitem{FJ2}
J.~L. Brumbaugh, Madeleine Bulkow, Luis~Alberto Garcia~German, Stephan~Ramon
  Garcia, Matt Michal, and Andrew~P. Turner, \emph{The graphic nature of the
  symmetric group}, Exp. Math. \textbf{22} (2013), no.~4, 421--442.
  \MR{3171103}

\bibitem{CR62}
Charles~W. Curtis and Irving Reiner, \emph{Representation theory of finite
  groups and associative algebras}, Pure and Applied Mathematics, Vol. XI,
  Interscience Publishers, a division of John Wiley \& Sons, New York-London,
  1962. \MR{MR0144979 (26 \#2519)}

\bibitem{DiIs08}
Persi Diaconis and I.~M. Isaacs, \emph{Supercharacters and superclasses for
  algebra groups}, Trans. Amer. Math. Soc. \textbf{360} (2008), no.~5,
  2359--2392. \MR{MR2373317 (2009c:20012)}

\bibitem{GNGP}
William Duke, Stephan~Ramon Garcia, and Bob Lutz, \emph{The graphic nature of
  {G}aussian periods}, Proc. Amer. Math. Soc., (in press)
  \url{http://arxiv.org/abs/1212.6825}.

\bibitem{CKS}
Patrick~S. Fleming, Stephan~Ramon Garcia, and Gizem Karaali, \emph{Classical
  {K}loosterman sums: representation theory, magic squares, and {R}amanujan
  multigraphs}, J. Number Theory \textbf{131} (2011), no.~4, 661--680.
  \MR{2753270 (2012a:11114)}

\bibitem{RSS}
Christopher Fowler, Stephan~Ramon Garcia, and Gizem Karaali, \emph{{R}amanujan
  sums as supercharacters}, Ramanujan J., (in press)
  \url{http://arxiv.org/abs/1201.1060}.

\bibitem{HES}
Stephan~Ramon Garcia, Mark Huber, and Bob Lutz, \emph{Algebraic properties of
  {H}eilbronn's exponential sum: supercharacters, {F}ermat congruences, and
  {H}eath-{B}rown's bound}, (preprint) \url{http://arxiv.org/abs/1312.1034}.

\bibitem{Hardy}
G.~H. Hardy and E.~M. Wright, \emph{An introduction to the theory of numbers},
  fifth ed., The Clarendon Press Oxford University Press, New York, 1979.
  \MR{568909 (81i:10002)}

\bibitem{Isaacs}
I.~Martin Isaacs, \emph{Character theory of finite groups}, AMS Chelsea
  Publishing, Providence, RI, 2006, Corrected reprint of the 1976 original
  [Academic Press, New York; MR0460423]. \MR{MR2270898}

\bibitem{Ku75}
Philip~C. Kutzko, \emph{The cyclotomy of finite commutative {P}.{I}.{R}.'s},
  Illinois J. Math. \textbf{19} (1975), 1--17. \MR{MR0376627 (51 \#12802)}

\bibitem{Lehrer}
G.~I. Lehrer, \emph{The space of invariant functions on a finite {L}ie
  algebra}, Trans. Amer. Math. Soc. \textbf{348} (1996), no.~1, 31--50.
  \MR{1322953 (96f:20070)}

\bibitem{Letellier}
Emmanuel Letellier, \emph{Fourier transforms of invariant functions on finite
  reductive {L}ie algebras}, Lecture Notes in Mathematics, vol. 1859,
  Springer-Verlag, Berlin, 2005. \MR{2114404 (2005m:20036)}

\bibitem{McCarthyBook}
Paul~J. McCarthy, \emph{Introduction to arithmetical functions}, Universitext,
  Springer-Verlag, New York, 1986. \MR{815514 (87d:11001)}

\bibitem{Ramanujan}
S.~Ramanujan, \emph{On certain trigonometrical sums and their applications in
  the theory of numbers [{T}rans. {C}ambridge {P}hilos. {S}oc. {\bf 22} (1918),
  no. 13, 259--276]}, Collected papers of {S}rinivasa {R}amanujan, AMS Chelsea
  Publ., Providence, RI, 2000, pp.~179--199. \MR{2280864}

\bibitem{SchwarzBook}
Wolfgang Schwarz and J{\"u}rgen Spilker, \emph{Arithmetical functions}, London
  Mathematical Society Lecture Note Series, vol. 184, Cambridge University
  Press, Cambridge, 1994, An introduction to elementary and analytic properties
  of arithmetic functions and to some of their almost-periodic properties.
  \MR{1274248 (96b:11001)}

\bibitem{Springer}
T.~A. Springer, \emph{Trigonometric sums, {G}reen functions of finite groups
  and representations of {W}eyl groups}, Invent. Math. \textbf{36} (1976),
  173--207. \MR{0442103 (56 \#491)}

\bibitem{Tao}
Terence Tao, \emph{An uncertainty principle for cyclic groups of prime order},
  Math. Res. Lett. \textbf{12} (2005), no.~1, 121--127. \MR{2122735
  (2005i:11029)}

\end{thebibliography}
\bibliographystyle{amsplain}
\end{document}